\newcommand{\nc}{\newcommand}
\nc\Q{\mathbb Q}
\nc{\cA}{\mathcal{A}}
\nc{\cC}{\mathcal{C}}
\nc\cX{\mathcal{X}}
\renewcommand{\bar}{\overline}
\definecolor{darkgreen}{RGB}{0,127,14}
\nc\amend[3]{{\color{darkgreen}{+#1}}\footnote{\color{red}{-#2}\\\color{blue}Reason/Comment: #3}}
\newcommand\TODO[1]
\renewcommand*\env@matrix[1][*\c@MaxMatrixCols c]{%
  \hskip -\arraycolsep
  \let\@ifnextchar\new@ifnextchar
  \array{#1}}
\tikzset{%
  highlight/.style={rectangle,rounded corners,fill=blue!50,draw,
     fill opacity=0.5,thick,inner sep=0pt}
}
\newcommand{\tikzmark}[2]{\tikz[overlay,remember picture,
  baseline=(#1.base)] \node (#1) {#2};}
\nc{\lie}[1]{\mathfrak{#1}}
\theoremstyle{plain}
\newtheorem{theorem}[subsection]{Theorem}
\theoremstyle{definition} 
\newtheorem{lem}[subsection]{Lemma}
\newtheorem{cor}[subsection]{Corollary}
\newtheorem{prop}[subsection]{Proposition}
\newtheorem{exa}[subsection]{Example}
\newtheorem{defn}[subsection]{Definition}
\newtheorem{rem}[subsection]{Remark}
\newtheorem{nota}[subsection]{Notation}
\newcommand{\R}{\mathbb{R}}
\newcommand{\Z}{\mathbb{Z}}
\newcommand{\N}{\mathbb{N}}
\newcommand{\C}{\mathbb{C}}
\newcommand{\bc}{\mathbb{C}}
\newcommand{\Fl}{\mathcal{F}l}
\renewcommand\min{\text{min}}
\nc\Alpha{\text A}
\nc\Beta{\text B}
\newcommand\round[1]{\left(#1\right)}
\newcommand\curly[1]{\left\{#1\right\}}
\newcommand{\spitz}[1]{\left\langle #1 \right\rangle}
\nc\fold{\mathrm{fold}}
\nc\unfold{\mathrm{unfold}}
\title{FFLV polytopes are string polytopes}
\author[]{Ester Cleusters and Ghislain Fourier and Felix Lerner}
\address{RWTH Aachen University, Germany}
\email{ester.cleusters@rwth-aachen.de}
\email{fourier@art.rwth-aachen.de}
\email{felix.lerner@rwth-aachen.de}
\begin{document}

\maketitle
\begin{abstract}
In this paper, we establish that FFLV polytopes, which describe monomial bases compatible with the PBW filtration on finite-dimensional simple modules for $\lie{sl}_n, \lie{sp}_n$, are actually string polytopes as described by Littelmann and Berenstein-Zelevinsky for Demazure modules of higher rank Lie algebras. 
\end{abstract}
\section{Introduction}

Degenerations of flag varieties have been studied in various contexts. 
The PBW degenerate flag variety $\Fl^a$ was introduced by Evgeny Feigin \cite{Fei11} in the context of PBW degenerate $\lie{sl}_n(\bc)$-modules. 
He demonstrated that $\Fl^a$ is a flat degeneration of the classical flag variety and provided a description in terms of sequences of subspaces similarly to the description of the classical flag variety:
\[
\Fl^a = \{ (U_i) \mid U_i \subset \bc^n, \dim U_i = i, \text{pr}_i(U_i) \subset U_{i+1}\}
\]
Subsequently, Giovanni Cerulli Irelli and Martina Lanini \cite{CIL15} showed that this variety, also defined through an abelianization of $\lie{sl}_n$ is a natural object in Lie theory, specifically isomorphic to a Schubert variety in a partial flag variety. 
Precisely, they considered the element $w =  \ldots(s_3s_4s_5)(s_2s_3)s_1$ in $S_{2n}$ and the associated Schubert variety in
\[
\Fl = \{ (U_i) \mid U_i \subset \bc^{2n}, \dim U_i = 2i-1, U_i \subset U_{i+1}.\}
\]
Following this, they showed together with Littelmann in \cite{CILL16} that this isomorphism can be interpreted in terms of modules for Lie algebras. 
Specifically, they established that the aforementioned PBW degenerate module for a degenerate $\lie{sl}_n$ is isomorphic to a Demazure module for $\lie{sl}_{2n}$. 
To be precise, if $\lambda = \sum m_i \omega_i$ is the highest $\lie{sl}_n$-weight, then $V^a(\lambda)$ (the PBW degenerate module) is, 
as a module for the degenerate Lie algebra, isomorphic to $V_w(\tilde{\lambda})$, where $\tilde{\lambda} = \sum m_i \omega_{2i-1}$.\\

The aim of this paper is to present a combinatorial shadow of these isomorphisms. Specifically, in \cite{FFL11a}, the FFLV-polytope was introduced. 
Its lattice points parametrize a monomial basis of $V^a(\lambda)$ and, consequently, of $V(\lambda)$. 
This basis, particularly for $\lie{sl}_n$ with $n \geq 6$, is distinct from the well-known string bases of Littelmann \cite{Lit98} and Berenstein-Zelevinsky \cite{BZ01}, the bases constructed by Lusztig \cite{Lus90, Cal02}, and those by Nakashima-Zelevinsky \cite{NZ97}. 
The Gelfand-Tsetlin basis \cite{GT50} is, in fact, a Nakashima-Zelevinsky basis for a specific reduced expression of the longest Weyl group element. All these bases can be parametrized by lattice points in convex polytopes, such as string polytopes.

Conversely, the non-degenerate object, the Demazure module, has known monomial bases, like the string bases \cite{Cal02}, derived from the canonical basis using a reduced expression of $w$. Denoting the string polytope for $\tilde{\lambda}$ and $w$ with the given reduced expression as $Q_{\underline{w}}(\tilde{\lambda})$, and the FFLV polytope for $\lambda$ $FFLV(\lambda)$, the main result of this paper is stated as follows:
\begin{theorem}
The FFLV polytope $FFLV(\lambda)$ is unimodularly equivalent to the string polytope 
 $Q_{\underline{w}}(\tilde{\lambda})$.
\end{theorem}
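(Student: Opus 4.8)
The plan is to prove the theorem by writing down an explicit affine unimodular transformation that carries the string polytope $Q_{\underline w}(\tilde\lambda)$ onto $FFLV(\lambda)$, matching their defining inequalities facet by facet. First I would fix the reduced word: the element $w\in S_{2n}$ of \cite{CIL15} factors as a product of the blocks $s_1$, $s_2s_3$, $s_3s_4s_5,\dots$ read from right to left, i.e.\ the $k$-th block is $s_k s_{k+1}\cdots s_{2k-1}$ for $k=1,\dots,n-1$, and has length $\binom n2$. Concatenating the blocks yields a distinguished reduced word $\underline w$; its $\binom n2$ positions I index by the pairs $(i,j)$ with $1\le i<j\le n$, that is, by the positive roots of $\lie{sl}_n$. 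This identifies the ambient space of the string polytope with $\R^{\binom n2}$, the same space in which $FFLV(\lambda)$ lives.

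Next I would compute the string cone $\mathcal{C}_{\underline w}$ attached to $\underline w$, using the Berenstein--Zelevinsky/Littelmann recursion --- equivalently, the Gleizer--Postnikov description by rigorous paths in the wiring diagram of $\underline w$. The ``staircase'' shape of $\underline w$ should make these paths completely explicit; the key combinatorial claim is that, after the unimodular change from string coordinates $(t_{i,j})$ to root coordinates $(s_{i,j})$ --- a composition of coordinate permutations with partial summations, hence invertible over $\Z$ --- the inequalities cutting out $\mathcal{C}_{\underline w}$ become exactly the FFLV inequalities $s_\beta\ge 0$ together with the Dyck-path inequalities $\sum_{\beta\in p}s_\beta\le m_i+\cdots+m_{j-1}$. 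The underlying mechanism I expect is an ``unfolding'' that sends a Dyck path between simple roots $\alpha_i,\alpha_j$ of $\lie{sl}_n$ to a rigorous path in the wiring diagram of $\underline w$ in $S_{2n}$, and conversely.

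It then remains to track the $\lambda$-dependent part. The string polytope $Q_{\underline w}(\tilde\lambda)$ is obtained from $\mathcal{C}_{\underline w}$ by imposing the upper bounds read off from $\tilde\lambda$ along $\underline w$; substituting $\tilde\lambda=\sum m_i\omega_{2i-1}$ and applying the same unimodular coordinate change should produce precisely the right-hand sides $m_i+\cdots+m_{j-1}$ occurring in $FFLV(\lambda)$, completing the equivalence. As an independent sanity check, \cite{FFL11a} together with \cite{CILL16} shows that the two polytopes have the same number of lattice points in each weight space, since $\dim V(\lambda)=\dim V^a(\lambda)=\dim V_w(\tilde\lambda)$; this is consistent with (though weaker than) the unimodular equivalence we are after.

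I expect the second step to be the real obstacle. String cones for non-standard reduced words are governed by the Gleizer--Postnikov rigorous-path conditions, which are combinatorially delicate, and the heart of the argument is to carry these out for the staircase word $\underline w$ in $S_{2n}$ and to recognize the outcome, under the correct unimodular change of variables, as exactly the Dyck-path description of $FFLV(\lambda)$. Once the cones are matched, the Demazure/right-hand-side bookkeeping in the third step is comparatively routine.
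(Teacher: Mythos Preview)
Your plan is plausible but takes a genuinely different route from the paper, and the step you flag as the obstacle---a facet-by-facet match of the Gleizer--Postnikov inequalities for $\mathcal{C}_{\underline w}$ with the Dyck-path inequalities of $FFLV(\lambda)$---is precisely what the paper \emph{avoids}. The paper never computes the string cone of $\underline w$, never touches wiring diagrams or rigorous paths, and never matches facets.

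Instead, the paper fixes an explicit unimodular linear map $\mathcal{X}_n$ (independent of $\lambda$) and a translation $t_{X_n,\lambda}$ linear in $\lambda$, sets $T_{X_n,\lambda}=\mathcal{X}_n+t_{X_n,\lambda}$, and proves only a one-sided \emph{inclusion} $T_{X_n,\omega_i}(P(\omega_i)^{\Z})\subseteq Q_{\underline w}(\tilde\omega_i)^{\Z}$ for each fundamental weight. This is done not by inequalities but by representation theory: for each lattice point $p\in P(\omega_i)^{\Z}$ one checks directly on $\Lambda^{2i-1}\C^{2n}$ that the monomial $\underline f^{T(p)}$ does not kill the highest weight vector and is neglex-minimal among all $\sim_{2i-1}$-equivalent monomials (the minuscule property makes weight spaces one-dimensional, so this is tractable). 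The general $\lambda$ then follows from the Minkowski decomposition $P(\lambda)=\sum a_iP(\omega_i)$ together with $S(\lambda_1)+S(\lambda_2)\subseteq S(\lambda_1+\lambda_2)$ on the string side. Finally---and this is the point where your ``sanity check'' becomes load-bearing---the inclusion is upgraded to equality by the lattice-point count $\#P(\lambda)^{\Z}=\dim V^a(\lambda)=\dim V_w(\tilde\lambda)=\#Q_{\underline w}(\tilde\lambda)^{\Z}$, which uses the Cerulli Irelli--Lanini--Littelmann module isomorphism as essential input, not merely as a consistency check.

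So the trade-off is: your approach, if the GP combinatorics for the staircase word can be carried out cleanly, would be self-contained and would yield the facet description of $Q_{\underline w}(\tilde\lambda)$ directly (the paper only obtains this as a corollary). The paper's approach sidesteps that combinatorics entirely at the cost of importing the module isomorphism from \cite{CILL16}; its proof lives at the level of lattice points and monomial actions on exterior powers rather than at the level of inequalities.
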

Furthermore, in subsequent works (\cite{FFiL14, FFL11b}), PBW degenerate objects and compatible bases were also constructed for the symplectic Lie algebra. We extend our results to this setup as well:
\begin{theorem}
The symplectic FFLV-polytope is isomorphic to a string polytope for a symplectic Demazure module.
\end{theorem}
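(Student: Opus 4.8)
The plan is to mirror the proof of Theorem~1 and to reduce the symplectic statement to the type $A$ case via the diagram folding of type $A$ onto type $C$, realized by the order two automorphism $\sigma$. First I would assemble the symplectic analogues of the objects used above: from \cite{FFiL14,FFL11b}, the symplectic PBW degenerate module $V^a(\lambda)$ and the symplectic FFLV polytope, whose lattice points index a PBW monomial basis; and a symplectic counterpart of \cite{CIL15,CILL16}, namely a larger symplectic Lie algebra, a dominant weight $\tilde{\lambda}$ and a Weyl group element $w$ with reduced expression $\underline{w}$ such that $V^a(\lambda)\cong V_w(\tilde{\lambda})$ as a module over the degenerate symplectic Lie algebra. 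If this identification is not already on record, I would derive it from the $\lie{sl}_{2n}$ picture of \cite{CIL15,CILL16} by applying $\sigma$ throughout: the PBW degeneration and the abelianization both commute with $\sigma$, so $w$ is the $\sigma$-fold of the relevant element of the type $A$ Weyl group and $\tilde{\lambda}$ the associated $\sigma$-symmetric dominant weight.

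Next I would take $\underline{w}$ to be $\sigma$-adapted, i.e.\ the fold of the reduced word $\underline{v}$ used in Theorem~1 for the ambient $\lie{sl}_{2n}$-Demazure module, each simple reflection of the symplectic Weyl group being replaced by the commuting product over its $\sigma$-orbit. For such a word, Kashiwara's similarity of crystals identifies the symplectic crystal $B(\tilde{\lambda})$ with the $\sigma$-fixed subcrystal of the type $A$ crystal, and matches the two string parametrizations up to the coordinate projection $\fold$ that discards the redundant entry of each $\sigma$-orbit, the reverse passage on lattice points being recorded by $\unfold$. Consequently the symplectic string polytope equals $\fold$ applied to the type $A$ string polytope $Q_{\underline{v}}(\tilde{\lambda})$ of Theorem~1, and the lattice points of the former are exactly the $\fold$-images of the $\sigma$-fixed lattice points of the latter.

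On the polytope side I would check, from the Dyck path descriptions of \cite{FFL11b,FFiL14}, that the same projection sends the type $A$ FFLV polytope $FFLV(\lambda)$ of Theorem~1 to the symplectic FFLV polytope: the defining inequalities of the latter are precisely those inherited from $FFLV(\lambda)$ after identifying the coordinates within each $\sigma$-orbit, and the symplectic PBW monomials are the $\sigma$-symmetric type $A$ ones. It then remains to verify that the unimodular equivalence $FFLV(\lambda)\xrightarrow{\ \sim\ }Q_{\underline{v}}(\tilde{\lambda})$ of Theorem~1 is $\sigma$-equivariant; granting this, it descends to a unimodular equivalence of the $\sigma$-fixed point lattices, that is, between $\fold\bigl(FFLV(\lambda)\bigr)$, the symplectic FFLV polytope, and $\fold\bigl(Q_{\underline{v}}(\tilde{\lambda})\bigr)$, the symplectic string polytope, which is the assertion.

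I expect the main obstacle to be controlling string polytopes under folding. The FFLV polytope folds transparently, but the string cone depends on the reduced word and behaves well only for $\sigma$-adapted words, and even then the folded string coordinates may differ from the naive restriction by an explicit affine substitution that has to be tracked carefully; moreover one must show that the lattice points of the symplectic string polytope are exactly the $\fold$-images of the $\sigma$-fixed lattice points of $Q_{\underline{v}}(\tilde{\lambda})$, and not merely contained in that image, which calls for a normality or saturation statement on the symplectic side. A secondary, more bookkeeping-heavy obstacle is pinning down the symplectic analogue of \cite{CILL16} --- the correct larger symplectic Lie algebra, the element $w$, the weight $\tilde{\lambda}$ and the module isomorphism over the degenerate symplectic Lie algebra --- should it not already be recorded in \cite{FFiL14}.
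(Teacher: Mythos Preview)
Your proposal and the paper share the central idea of leveraging the type $A$ result through the relation between $C$ and $A$, but they differ in how that relation is used. You aim to fold the entire polytope isomorphism: identify the symplectic FFLV and string polytopes as $\sigma$-fixed (or $\sigma$-quotient) loci of their type $A$ counterparts, verify that the unimodular map is $\sigma$-equivariant, and descend. (Note a rank mismatch in your outline: to land in the $C_{2n-1}$ Demazure module one must fold the type $A$ statement at rank $2n-1$, i.e.\ the instance relating the $A_{2n-1}$ FFLV polytope to an $A_{4n-3}$ Demazure string polytope, not the rank $n$ instance literally stated as Theorem~1.)

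The paper instead writes the symplectic affine map $T_{C_n,\lambda}=\cC_n(\,\cdot\,)+t_{C_n,\lambda}$ down explicitly and reruns the Minkowski-sum/dimension-count framework verbatim, so that only the fundamental-weight Lemma remains. For that lemma it uses the classical embedding $C_{2n-1}\hookrightarrow A_{4n-3}$ purely at the level of operators: after ``unfolding'' the symplectic monomial $\underline f^{\,T_{C_n,\omega_i}(p)}$ into a sum of $A_{4n-3}$-monomials, one specific summand coincides (up to commuting reorderings) with $\underline f^{\,T_{A_{2n-1},\omega_i}(p)}$, where $p$ is viewed canonically inside the $A_{2n-1}$ FFLV polytope; the already-proven type $A$ lemma then supplies non-annihilation and minimality, and these are pushed back to $C_{2n-1}$ via the fold map. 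This sidesteps exactly the obstacle you anticipate: no statement about how string polytopes transform under diagram folding is needed, and the symplectic FFLV polytope is never identified as a folded type $A$ FFLV polytope. Your route would give a more structural explanation if the folding of string cones and the $\sigma$-compatibility of the FFLV description can be established, but the paper's is shorter because it only needs the fold/unfold relation for a single operator computation rather than for the polytopes themselves.
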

Considering the other classical types is more delicate. Something is wrong? No FFLV-Polytopes are known for an abelianization of orthogonal Lie algebras yet. 
Nevertheless, Igor Makhlin has provided a polytope describing monomial bases for an almost abelian Lie algebra in type $B$, where the short root vectors are not abelianized \cite{Mak19}. 
A geometric interpretation of this will be presented in the upcoming work \cite{EFFS24}. 
The combinatorial shadow of this is also part of an upcoming project, as our current methods do not readily apply to this setup. \\
In fact, we might want to change the point of view and pull back a string polytope for Demazure modules to a candidate for an FFLV polytope in type $B$ and $D$, as the results in \cite{EFFS24} suggest a suitable Weyl group element and reduced expression. \\
Another application we are pursuing involves tableau combinatorics, such as the Littlewood-Richardson rules.
Although the FFLV polytopes are translated to PBW semistandard tableaux \cite{Fei11, Bal23}, the usual tableaux combinatorics are yet to be developed.
We aim to develop a Littlewood-Richardson rule that is compatible with the PBW filtration.\\
In \cite{Gor15}, similar bases have been provided for type $G_2$. 
Although there are existing geometric interpretations, as noted in \cite{EFFS24} and \cite{Enu22}, computational experiments indicate that the polytope proposed in \cite{Gor15} does not correspond to a string polytope for a Demazure module of affine Lie algebras.\\

It should be mentioned that this combinatorial picture can only be a shadow of the geometric construction of desingularizations and Newton-Okounkov bodies, following the concept of birational sequences \cite{FaFL17}. This still has to be explored. \\

\noindent The paper is structured as follows: We recall definitions and foundations on Lie algebras, monomial bases and degenerations in Section~\ref{sec-prel}. Then we provide the unimodular equivalence for the two main theorems and the outline of their proofs in Section\ref{sec-main}. In Section\ref{sec-gen} we give the proof while leaving out the considerations of the two cases to Section\ref{sec-a} and Section~\ref{sec-c}.\\

\noindent\textbf{Acknowledgments} 
The authors gratefully acknowledge financial support by the DFG –Project-ID 286237555–TRR 195. The authors would like to thank Xin Fang, Martina Lanini and Chrisian Steinert for helpful discussions.

\section{Preliminaries}\label{sec-prel}
We fix here definitions, notations and recall results, that will be used throughout this paper.
\subsection{Notations}
We introduce a few standard definitions and notations, we follow here \cite{Hum72}. We denote $\lie g = \lie n^+ \oplus \lie h \oplus \lie n^-$  a simple, finite-dimensional complex Lie algebra with a fixed triangular decomposition. 
We denote the set of (positive) roots $R$ (resp. $R^+$), the set of (dominant) integral weights $P$ (resp. $P^+$). 
For each positive root $\alpha$, we fix root vectors $e_\alpha \in \lie n^+, f_\alpha \in \lie n^-$ and $h_\alpha := [e_\alpha, f_\alpha]$. 
If $n = \text{rk } \lie g$, then $\alpha_1, \ldots, \alpha_n$ denote the simple roots, we set $f_i := f_{\alpha_i}$, the fundamental weights are denoted $\omega_1, \ldots, \omega_n$. 
The height of a positive root is the sum of the coefficients when written in terms of the simple roots.\\
The Weyl group is denoted $W$, the simple reflection corresponding to $\alpha_i$ is denoted $s_i$, the longest element of $W$ is denoted $w_0$.\\
Let $\lambda \in P^+$, the finite-dimensional simple module of highest weight $\lambda$ is denoted $V(\lambda)$ and we fix a generator of the highest weight line $v_\lambda$. Similar, we fix for each $w \in W$ a generator $v_{w(\lambda)}$ of the line of weight $w(\lambda)$. 
Let $\lie b = \lie n^+ \oplus \lie h$. 
With the universal enveloping algebra $U(\lie g) = U(\lie n^-) U(\lie b)$ one has $V(\lambda) = U(\lie n^-).v_\lambda$.
For fixed $\lambda \in P^+, w \in W$, we denote the Demazure module $V_w(\lambda) := U(\lie b).v_{w(\lambda)}$. 

\subsection{Monomial bases}
Since $V(\lambda) = U(\lie n^-).v_\lambda$ and $V_w(\lambda) := U(\lie b).v_{w(\lambda)}$ one can ask for monomial bases of $V(\lambda)$ (resp. $V_w(\lambda)$) within $U(\lie n^-)$ (resp. $U(\lie n^+)$) . 
A general framework on how to construct bases which are suitable within a Lie theoretic context is described in \cite{FFL17} using birational sequences. 
One orders $\dim \lie n^-$ roots such that the product of their one-dimensional root subgroups is birational to $\lie n^-$, then ordered monomials in the root vectors form a spanning set of $U(\lie n^-)$. Fixing a monomial ordering provides a basis of $U(\lie n^-)$ and of course also of $V(\lambda)$. 
The monomials are parametrized by their exponents and we denote $S(\lambda)$, the set of integer lattice points. 
Then we have the general fact
\begin{prop}[\cite{FFL17}]\label{prop:general-mink} Let $\lambda_1, \lambda_2 \in P^+$, then 
\[
S(\lambda_1) + S(\lambda_2) \subseteq S(\lambda_1 + \lambda_2).
\]
\end{prop}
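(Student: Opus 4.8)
\medskip
\noindent\emph{Proof strategy.}
The plan is to realise $V(\lambda_1+\lambda_2)$ as the Cartan component inside $V(\lambda_1)\otimes V(\lambda_2)$ and to run a leading--term (triangularity) comparison with respect to the monomial order. Fix once and for all the birational sequence $\beta_1,\dots,\beta_N$ of positive roots and the monomial order $<$ on $\Z_{\ge 0}^N$ underlying the sets $S(\,\cdot\,)$, and abbreviate $f^{\mathbf a}:=f_{\beta_N}^{a_N}\cdots f_{\beta_1}^{a_1}$. By construction $S(\lambda)$ is produced from the spanning set $\{f^{\mathbf a}v_\lambda\}$ by keeping, in increasing $<$--order, those $\mathbf a$ for which $f^{\mathbf a}v_\lambda$ is independent of the already--kept ones; hence for every $\mathbf a$ one has $f^{\mathbf a}v_\lambda\in\operatorname{span}\{f^{\mathbf b}v_\lambda:\ \mathbf b\le\mathbf a,\ \mathbf b\in S(\lambda)\}$, and if $\mathbf a\in S(\lambda)$ the coefficient of $f^{\mathbf a}v_\lambda$ in such an expression is $1$. (Throughout, $\le$ denotes the monomial order, which refines the componentwise order.) Fix $\mathbf s\in S(\lambda_1)$ and $\mathbf t\in S(\lambda_2)$; we want $\mathbf s+\mathbf t\in S(\lambda_1+\lambda_2)$.

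The one genuinely computational ingredient is the coproduct identity
\[
\Delta\bigl(f^{\mathbf m}\bigr)=\sum_{\mathbf a\le\mathbf m}\binom{\mathbf m}{\mathbf a}\,f^{\mathbf a}\otimes f^{\mathbf m-\mathbf a},
\qquad \binom{\mathbf m}{\mathbf a}:=\prod_i\binom{m_i}{a_i},
\]
with \emph{ordered} monomials, in the same fixed order, on both tensor legs and with no reordering corrections. The point is that every root vector $f_\beta$ is primitive: the simple $f_i$ are, and a commutator of primitive elements is primitive because the cross terms $[x\otimes1,\,1\otimes y]$ vanish in $U(\lie g)\otimes U(\lie g)$; since $f_\beta$ is a scalar multiple of an iterated commutator of simple root vectors, induction on the height of $\beta$ gives it. Then $\Delta(f_{\beta_i})^{m_i}=\sum_{a_i}\binom{m_i}{a_i}f_{\beta_i}^{a_i}\otimes f_{\beta_i}^{m_i-a_i}$, and multiplying these factors out in the fixed order via $(x\otimes y)(x'\otimes y')=xx'\otimes yy'$ reassembles exactly the ordered monomials $f^{\mathbf a}$ and $f^{\mathbf m-\mathbf a}$.

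Next I would pass to the Cartan embedding $\iota\colon V(\lambda_1+\lambda_2)\hookrightarrow V(\lambda_1)\otimes V(\lambda_2)$, $v_{\lambda_1+\lambda_2}\mapsto v_{\lambda_1}\otimes v_{\lambda_2}$, and expand
\[
\iota\bigl(f^{\mathbf s+\mathbf t}v_{\lambda_1+\lambda_2}\bigr)=\sum_{\mathbf a\le\mathbf s+\mathbf t}\binom{\mathbf s+\mathbf t}{\mathbf a}\,(f^{\mathbf a}v_{\lambda_1})\otimes(f^{\mathbf s+\mathbf t-\mathbf a}v_{\lambda_2})
\]
in the product basis $\{(f^{\mathbf b}v_{\lambda_1})\otimes(f^{\mathbf c}v_{\lambda_2}):\ \mathbf b\in S(\lambda_1),\ \mathbf c\in S(\lambda_2)\}$. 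The summand indexed by $\mathbf a$ involves only basis vectors with $\mathbf b\le\mathbf a$ and $\mathbf c\le\mathbf s+\mathbf t-\mathbf a$, so, since the monomial order is compatible with translation, a contribution to the coordinate $(\mathbf b,\mathbf c)=(\mathbf s,\mathbf t)$ forces both $\mathbf s\le\mathbf a$ and $\mathbf a\le\mathbf s$, i.e. $\mathbf a=\mathbf s$, where the coefficient in question is the honest $\binom{\mathbf s+\mathbf t}{\mathbf s}\ne0$. Hence $\iota(f^{\mathbf s+\mathbf t}v_{\lambda_1+\lambda_2})$ has nonzero $(\mathbf s,\mathbf t)$--coordinate. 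If $\mathbf s+\mathbf t\notin S(\lambda_1+\lambda_2)$, then $f^{\mathbf s+\mathbf t}v_{\lambda_1+\lambda_2}=\sum_{\mathbf u<\mathbf s+\mathbf t}c_{\mathbf u}\,f^{\mathbf u}v_{\lambda_1+\lambda_2}$ for suitable scalars; applying $\iota$ and the same expansion to each $f^{\mathbf u}v_{\lambda_1+\lambda_2}$, a contribution to the $(\mathbf s,\mathbf t)$--coordinate would need $\mathbf s\le\mathbf a$ and $\mathbf t\le\mathbf u-\mathbf a$ for some $\mathbf a\le\mathbf u$, whence $\mathbf s+\mathbf t\le\mathbf u$, contradicting $\mathbf u<\mathbf s+\mathbf t$. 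So the right--hand side has vanishing $(\mathbf s,\mathbf t)$--coordinate while the left--hand side does not, a contradiction; therefore $\mathbf s+\mathbf t\in S(\lambda_1+\lambda_2)$.

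I expect the only real obstacle to be the coproduct identity --- specifically, ensuring that the single fixed ordering of the root vectors survives on \emph{both} tensor legs, so that the product basis of $V(\lambda_1)\otimes V(\lambda_2)$ is indexed compatibly with the monomials $f^{\mathbf a}$. That is exactly what primitivity of all the $f_\beta$ (and not merely of the simple $f_i$) supplies; once it is in place, the remainder is the triangularity bookkeeping above.
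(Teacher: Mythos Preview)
The paper does not supply its own proof of this proposition; it is quoted from \cite{FFL17} and used as a black box. Your argument is correct and is essentially the standard one from that reference: embed $V(\lambda_1+\lambda_2)$ as the Cartan component of $V(\lambda_1)\otimes V(\lambda_2)$, use the exact coproduct formula for ordered monomials in primitive elements, and run a leading--term comparison in the product basis $\{f^{\mathbf b}v_{\lambda_1}\otimes f^{\mathbf c}v_{\lambda_2}\}$.

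Two minor remarks. First, primitivity of each $f_\beta$ requires no induction on height: every element of $\lie g$ is primitive in $U(\lie g)$ by the very definition of the comultiplication, so your commutator argument, while correct, is unnecessary. Second, in the coproduct identity the summation is really over \emph{componentwise} $\mathbf a\le\mathbf m$ (the binomial coefficient vanishes otherwise); since you declared $\le$ to be the monomial order, the formula as written is still correct but only because the extra terms are zero. The translation invariance of the monomial order, which you use to pass from $\mathbf t\le\mathbf s+\mathbf t-\mathbf a$ to $\mathbf a\le\mathbf s$, is an implicit hypothesis; it holds for the orders (\emph{neglex}, \emph{degrevlex}) actually used in the paper.
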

The construction of monomial bases, of course, also works for Demazure modules using a birational sequence for $U(\lie n^+)$ and a fixed monomial ordering.\\
Famous examples of monomial bases of $V(\lambda)$ that fit into the framework of birational sequences are the Gelfand-Tsetlin bases \cite{GT50}, the string bases of Littelmann and Berenstein-Zelevinsky \cite{Lit98, BZ01}, the Lusztig bases \cite{Lus90, Cal02}, the Nakashima-Zelevinsky bases \cite{NZ97}, the FFLV bases \cite{FFL11a, FFL11b} and the Makhlin basis \cite{Mak19}. We recall here briefly two of them, the string bases and the FFLV bases.\\
\textit{String bases}:\\
Let $w \in W$ and we fix a reduced expression $\underline{w}_0 = s_{i_1} \cdots s_{i_k}$ of the longest Weyl group element. Then $(\alpha_{i_1}, \ldots, \alpha_{i_k})$ is a birational sequence and the following is a spanning set of $V(\lambda)$
$$
\{ f_{i_1}^{t_1} \cdots f_{i_k}^{t_k}. v_\lambda \mid t_j \in \N_{\geq 0} \}
$$
For the string bases, one fixes the \textit{neglex} ordering, then the bases obtained through the birational sequence and $\textit{neglex}$ are described in \cite{Lit98, BZ01}. 
In fact, the latter provides a convex polytope $Q_{\underline{w}_0}(\lambda)$ whose lattice points parametrize the monomials (via their exponents), e.g. $Q_{\underline{w}_0}(\lambda)^\Z = S_{\underline{w}_0}(\lambda)$.
Moreover, let $w \in W$ and $V_w(\lambda)$ be the associated Demazure module, $w = s_{i_1} \cdots s_{i_\ell}$ be a reduced expression, extended by $s_{i_{\ell+1}} \cdots s_{i_k}$ to a reduced expression $\underline{w}_0$ of $w_0$, then a monomial basis of $V_w(\lambda)$ is parametrized by the lattice points in the projection of $Q_{\underline{w}_0}(\lambda)$ to the first $\ell$ coordinates (\cite{Cal02}). 
We denote this convex polytope $Q_{\underline{w}}(\lambda)$.\\
\textit{FFLV}:\\
A \textit{good ordering} of the positive roots is an ordering such that $\beta \prec \alpha \Rightarrow$ the height of $\beta$ is smaller than the height of $\alpha$. 
We fix a good ordering on the positive roots, starting with the highest root (and consequently ending with an arbitrary ordering of the simple roots), this provides a birational sequence and the FFLV-type bases are obtained from here by fixing the \textit{degrevlex} (the homogeneous reverse lexicographic) order. 
For type $A$ and $C$ they are described in \cite{FFL11a} and \cite{FFL11b}, while for type $G_2$ they are described in \cite{Gor15}. 
In fact, in loc. cit., convex polytopes are provided whose lattice points parameterize the bases, in type $A$ and $C$ these polytopes $P(\lambda)$ are lattice polytopes for all $\lambda$. 
Moreover, one has for $\lambda = \sum a_i \omega_i \in P^+$ 
\[
P(\lambda) = \sum a_i P(\omega_i),
\]
where $+$ denotes the Minkowski sum.
\medskip

In the 1990's several families of monomial bases were constructed, mostly depending on the canonical (global) bases combinatorics. In this perspective, the upcoming remark is crucial for our interest in the topic.
\begin{rem}\label{rem-not}
    In general (for type $A$, $n \geq 4$, $\lambda$ regular, other types similarly), there is no reduced expression of $w_0$ such that $P(\lambda)$ is unimodular equivalent to $Q_{\underline{w_0}}(\lambda)$. In fact, the number of facets in the FFLV polytope is in general strictly larger than the number of facets in any string polytope for $V(\lambda)$. The FFLV-type polytope does not belong to the world of string polytopes (consequently also Lusztig polytopes) obtained from canonical bases.
\end{rem}
It should also be mentioned here, that the FFLV polytopes are instances of marked chain polytopes \cite{ABS11} while the Gelfand-Tsetlin polytopes are instances of marked order polytopes. So by \cite{HL16} they are in general (beyond the natural representations) not unimodular equivalent. For more on this framework we refer to  \cite{FFLP20}.

\subsection{Degenerations}
The PBW filtration on $U\lie (\lie n^-)$ induces a filtration on $V(\lambda)$, the associated graded module $V^a(\lambda)$ is then a module for the associated graded (abelian) Lie algebra $\lie n^{-,a}$. Since the PBW filtration on $V(\lambda)$ is stable under $U(\lie b)$, $V^a(\lambda)$ is a module for the semi direct product $\lie b \rtimes \lie n^{-,a}$. In type $A$ and $C$, the FFLV bases are monomial bases for this degenerate module as they are compatible with the PBW degree.
\bigskip

The degenerate modules we used in \cite{Fei11} to define the PBW degenerate flag variety in type $A$, later generalized in \cite{FFiL14} and \cite{FFL17} to other types. Shortly after, in \cite{CIL15}, it was shown that this degenerate flag variety (in type $A$ and $C$) is actually a Schubert variety in a partial flag variety (in higher rank). 
This surprising result, the variety obtained by forgetting the Lie structure on the module, is still a reasonable object in Lie theory, has also an interpretation in terms of modules.
Namely, there is an Lie algebra isomorphism (shown in\cite{CILL16} for type $A$ and $C$) between $\lie b \times \lie n^{-,a}$ and a subquotient on the Borel subalgebra in $\tilde{\lie g}$, the Lie algebra of the same type as $\lie g$ but of rank $2n-1$. 
This translates to an isomorphism between $V^a(\lambda)$ and $V_{\underline{w}}(\tilde{\lambda})$ for a specific $\underline{w}$ and $\tilde{\lambda}$. 
In \cite{EFFS24}, this is even generalized to all classical type up to some modification on the filtration.

\section{Main theorem}\label{sec-main}
We are giving a combinatorial shadow of the $\lie n^{-,a}$-module isomorphism between $V^a(\lambda)$ and $V_{\underline{w}}(\tilde{\lambda})$. That is, we will show that the FFLV-type polytopes for $V(\lambda)$ are unimodularly equivalent to the string polytopes for $V_{\underline{w}}(\tilde{\lambda})$. As mentioned in Remark~\ref{rem-not}, the FFLV-type polytopes for $V(\lambda)$ are in general not isomorphic to string polytopes for $V(\lambda)$. The isomorphism we will construct is to string polytopes of Demazure modules.\\
Let $\lie g$ be of type $X_n$ (here $X$ is $A$ or $C$) and $\lambda = \sum_{i=1}^{n} a_i \omega_i \in P^+$ Denote $P(\lambda)$ the FFLV polytope. We set for $X_{2n-1}$
\[
\tilde{\lambda} = \sum_{i} a_i \omega_{2i-1}
\]
and we fix $\underline{w}$ in the various types as
\begin{enumerate}
    \item[$A$:] $\underline{w} = \sigma_n \sigma_{n-1}\cdots \sigma_3\sigma_2\sigma_1$
    \item[$C$:] $\underline{w} = \tau_{2n-1}\tau_{2n-2}\dots \tau_{n+1}\sigma_n \sigma_{n-1}\cdots \sigma_3\sigma_2\sigma_1$
\end{enumerate}
where
\begin{align*}
    \tau_j = s_j s_{j+1} \dots s_{2n-1} \quad \text{and} \quad \sigma_j= s_j s_{j+1} \dots s_{2j-1}
\end{align*}

\begin{theorem}\label{thm:main}
    The polytopes $P(\lambda)$ and $Q_{\underline{w}}(\tilde{\lambda})$ are isomorphic. 
\end{theorem}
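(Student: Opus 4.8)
The plan is to exploit the Minkowski–sum decomposition available on both sides and reduce the statement to the fundamental weights. By the last displayed formula in the FFLV discussion we have $P(\lambda)=\sum_i a_i P(\omega_i)$, and by Proposition~\ref{prop:general-mink} applied to the birational sequence underlying the string parametrization we get the inclusion $\sum_i a_i Q_{\underline{w}}(\omega_{2i-1}) \subseteq Q_{\underline{w}}(\tilde\lambda)$. If I can show that this inclusion is an equality (equivalently, that the string polytope is ``Minkowski-closed'' along the directions $\omega_1,\omega_3,\dots,\omega_{2n-1}$ picked out by $\tilde\lambda$), then it suffices to produce a single unimodular map $\phi$ on the ambient lattice carrying $P(\omega_i)$ onto $Q_{\underline{w}}(\omega_{2i-1})$ for each $i$, and $\phi$ will then carry the Minkowski sums onto each other. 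So the proof has three stages: (1) establish the Minkowski additivity of $Q_{\underline{w}}(\tilde\lambda)$ in the relevant directions; (2) identify the fundamental-weight polytopes on both sides; (3) glue with a unimodular change of coordinates.

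For stage (2), I would write down the FFLV polytope $P(\omega_i)$ explicitly from \cite{FFL11a} (type $A$) and \cite{FFL11b} (type $C$) — for type $A$ and a fundamental weight this is a very small, combinatorially transparent polytope (a simplex-like object indexed by the relevant Dyck paths / the subset of positive roots that can act nontrivially on $v_{\omega_i}$). On the string side I would use the explicit description of $Q_{\underline{w}_0}(\mu)$ via the \emph{neglex}/string inequalities for the chosen reduced word $\underline{w}_0$ extending $\underline{w}$, then project to the first $\ell$ coordinates as in \cite{Cal02} to obtain $Q_{\underline{w}}(\omega_{2i-1})$. Because $\omega_{2i-1}$ is a fundamental weight for $\tilde{\lie g}=X_{2n-1}$, most string coordinates are forced to $0$ and the surviving ones match, under an explicit relabeling of root indices, exactly the FFLV coordinates indexed by the roots of $\lie g$ that survive in $P(\omega_i)$; recording this bijection of coordinates, together with the claim that it sends the defining inequalities of one polytope to those of the other, is the bulk of the bookkeeping and is exactly what gets relegated to Section~\ref{sec-a} and Section~\ref{sec-c}.

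The main obstacle is stage (1): string polytopes are in general \emph{not} Minkowski-additive (indeed Remark~\ref{rem-not} emphasizes that $Q_{\underline{w}_0}(\lambda)$ is genuinely different from $P(\lambda)$ in general), so the additivity $\sum_i a_i Q_{\underline{w}}(\omega_{2i-1}) = Q_{\underline{w}}(\tilde\lambda)$ must be a special feature of \emph{this} Weyl group element $\underline{w}$ and these particular fundamental weights $\omega_{2i-1}$. I expect to prove it not abstractly but as a byproduct of stage (2)–(3): having matched both fundamental-weight polytopes with a common unimodular model whose additivity \emph{is} known (the FFLV side, where $P(\lambda)=\sum a_i P(\omega_i)$ holds by \cite{FFL11a,FFL11b}), one transports additivity back across $\phi$. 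Concretely, the logical order I would follow is: fix $\phi$; show $\phi(P(\omega_i)) = Q_{\underline{w}}(\omega_{2i-1})$ for all $i$; deduce $\phi(P(\lambda)) = \phi(\sum a_i P(\omega_i)) = \sum a_i Q_{\underline{w}}(\omega_{2i-1}) \subseteq Q_{\underline{w}}(\tilde\lambda)$; and finally upgrade the inclusion to equality by a dimension/lattice-point count, comparing $\#Q_{\underline{w}}(\tilde\lambda)^{\Z} = \dim V_{\underline{w}}(\tilde\lambda)$ with $\#P(\lambda)^{\Z} = \dim V^a(\lambda) = \dim V(\lambda)$ and invoking the module isomorphism $V^a(\lambda)\cong V_{\underline{w}}(\tilde\lambda)$ of \cite{CILL16}. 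The type-$C$ case carries the extra $\tau_j$ factors in $\underline{w}$, so the coordinate bijection and the inequality matching are correspondingly longer; those details are the content of Section~\ref{sec-c}.
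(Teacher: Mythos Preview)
Your overall strategy --- reduce to fundamental weights via the FFLV Minkowski decomposition, push forward under a unimodular map, use Proposition~\ref{prop:general-mink} to land inside $Q_{\underline w}(\tilde\lambda)$, and then upgrade the inclusion to equality by the lattice-point count coming from $\dim V^a(\lambda)=\dim V_w(\tilde\lambda)$ --- is exactly the paper's strategy. Two places where your outline is looser than the paper's execution deserve comment.

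First, a \emph{single} unimodular map $\phi$ will not do what you want. If $\phi$ is linear, the equality $\phi(P(\omega_i))=Q_{\underline w}(\omega_{2i-1})$ for all $i$ simultaneously is not available: the natural identification sends the FFLV origin (highest weight vector) to the nonzero string of the extremal vector $v_{w(\omega_{2i-1})}$, and that string depends on $i$. If instead $\phi$ is affine with a fixed translation, then $\phi\bigl(\sum a_iP(\omega_i)\bigr)\ne\sum a_i\phi(P(\omega_i))$ and your Minkowski step breaks. The paper resolves this by using a \emph{family} of affine maps $T_{X_n,\lambda}(y)=\mathcal X_n(y)+t_{X_n,\lambda}$ with a common unimodular linear part $\mathcal X_n$ and a translation $t_{X_n,\lambda}$ that is \emph{linear in $\lambda$}; this is precisely what makes $T_{X_n,\lambda}(P(\lambda))=\sum a_i T_{X_n,\omega_i}(P(\omega_i))$ hold.

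Second, your final step ``equal lattice-point counts $\Rightarrow$ equal polytopes'' tacitly assumes $Q_{\underline w}(\tilde\lambda)$ is a lattice polytope, but string polytopes are a priori only rational. The paper handles this with a scaling trick: take $k$ minimal with $kQ_{\underline w}(\tilde\lambda)$ a lattice polytope, run the count at $k\lambda$, and deduce $k=1$ (so the corollary that $Q_{\underline w}(\tilde\lambda)$ is a lattice polytope is an output, not an input).

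One further methodological difference: for the fundamental-weight step you propose to match defining \emph{inequalities} on both sides. The paper never writes down the string inequalities. It proves only the lattice-point inclusion $T_{X_n,\omega_i}(P(\omega_i)^{\Z})\subseteq Q_{\underline w}(\tilde\omega_i)^{\Z}$, by checking for each image point that the associated monomial does not annihilate $v_{\omega_{2i-1}}$ and is neglex-minimal in its weight class; equality at the fundamental level then falls out of the global count rather than being established directly.
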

\begin{cor}
    Restricted to the lattice points, the equivalence is weight preserving (up to a twist and shift).
\end{cor}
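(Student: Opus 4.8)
The plan is to read the weight statement off the explicit form of the unimodular equivalence $\varphi\colon P(\lambda)\to Q_{\underline w}(\tilde\lambda)$ constructed in the proof of Theorem~\ref{thm:main}. First recall that both polytopes carry a natural grading of their lattice points. A lattice point $\mathbf s=(s_\beta)_{\beta\in R^+}$ of $P(\lambda)$ indexes the FFLV monomial $\big(\prod_{\beta\in R^+}f_\beta^{\,s_\beta}\big).v_\lambda$, hence the $\lie h$-weight
\[
\mathrm{wt}(\mathbf s)\;=\;\lambda-\sum_{\beta\in R^+}s_\beta\,\beta ,
\]
an affine-linear function of $\mathbf s$. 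Likewise, writing $\underline w=s_{i_1}\cdots s_{i_\ell}$, a lattice point $\mathbf t=(t_1,\dots,t_\ell)$ of $Q_{\underline w}(\tilde\lambda)$ indexes the string monomial $f_{i_1}^{t_1}\cdots f_{i_\ell}^{t_\ell}.v_{\tilde\lambda}$ and hence the $\tilde{\lie h}$-weight $\widetilde{\mathrm{wt}}(\mathbf t)=\tilde\lambda-\sum_{j=1}^\ell t_j\,\alpha_{i_j}$, again affine-linear in $\mathbf t$.

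Next I would make the twist and shift precise. The Lie algebra isomorphism of \cite{CILL16,FFiL14} identifies $\lie b\rtimes\lie n^{-,a}$ with a subquotient of $\tilde{\lie b}$; in particular it embeds the torus $\lie h$ into $\tilde{\lie h}$, and the induced surjection $\pi\colon\tilde{\lie h}^*\twoheadrightarrow\lie h^*$ on weights is the twist (with $\pi(\omega_{2i-1})$ agreeing with $\omega_i$ up to the shift). Since $V^a(\lambda)\cong V_{\underline w}(\tilde\lambda)$ is an isomorphism of modules for these algebras, it sends the $\mu$-weight space of $V^a(\lambda)$ into the $\tilde\mu$-weight space of $V_{\underline w}(\tilde\lambda)$ whenever $\pi(\tilde\mu)=\mu+d$, with $d\in\lie h^*$ a fixed shift. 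The precise statement to prove is therefore the affine identity
\[
\pi\big(\widetilde{\mathrm{wt}}(\varphi(\mathbf s))\big)\;=\;\mathrm{wt}(\mathbf s)+d
\qquad\text{for all }\mathbf s\in P(\lambda)^{\Z},
\]
and both sides being affine-linear in $\mathbf s$, it suffices to verify it at $\mathbf s=\mathbf 0$ (which pins down $d$) and on each coordinate direction $\mathbf e_\beta$, $\beta\in R^+$.

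The direction check is where the explicit description of $\varphi$ from Sections~\ref{sec-a} and~\ref{sec-c} enters. There $\varphi$ is assembled blockwise along $\underline w=\sigma_n\cdots\sigma_1$ (with the extra blocks $\tau_{2n-1}\cdots\tau_{n+1}$ in type $C$), and incrementing the FFLV exponent $s_\beta$ by one changes $\varphi(\mathbf s)$ by a fixed tuple $\mathbf v_\beta$; the claim then reduces to
\[
\pi\Big(\sum_{j}(\mathbf v_\beta)_j\,\alpha_{i_j}\Big)\;=\;\beta\qquad(\beta\in R^+).
\]
This is precisely the combinatorial shadow of the fact that under the isomorphism of \cite{CILL16} the abelian root vector $f_\beta\in\lie n^{-,a}$ corresponds to a root vector of $\tilde{\lie n}^+$ whose $\tilde{\lie h}$-weight restricts to $\beta$ on $\lie h$, and it is the only genuine computation in the argument. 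I expect it to be the main obstacle. For type $A$ it follows directly from the shape $\sigma_j=s_j s_{j+1}\cdots s_{2j-1}$ of the blocks together with the rule that $\alpha_{2i-1}$ of $\tilde{\lie g}$ tracks the $i$-th direction of $\lie g$; for type $C$ the $\tau$-blocks and the interplay of short and long roots require more bookkeeping, but nothing beyond what is already needed to define $\varphi$. Once the displayed identities hold, restricting $\varphi$ to lattice points gives exactly the asserted weight-preservation up to the twist $\pi$ and the shift $d$.
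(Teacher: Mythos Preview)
The paper does not prove this corollary; it is stated immediately after Theorem~\ref{thm:main} without argument, presumably because the authors view it as immediate from the explicit affine form $T_{X_n,\lambda}=\mathcal X_n(\cdot)+t_{X_n,\lambda}$ of the equivalence together with the fact that the weight functions on both polytopes are themselves affine-linear in the coordinates. Your plan is therefore more detailed than anything the paper offers, and the reduction you set up---both weight maps affine, $T$ affine, hence it suffices to check the constant term and one identity per coordinate direction---is correct. The coordinate-direction identity you isolate, $\pi\bigl(\sum_j(\mathcal X_n(e_\beta))_j\,\tilde\alpha_{i_j}\bigr)=\beta$, is indeed the only substantive step; in type $A$ it can be read off Definition~\ref{def cX} and the convention $\underline f^{\,e_{\ell,j}}=f_{\ell+j-1}$ recorded before Definition~\ref{ordering}, since for $\beta=\alpha_{a,b}$ the simple-root content of $-\mathcal X_n(e_{a,b})$ telescopes to the single $\tilde{\lie g}$-root $\tilde\alpha_b+\cdots+\tilde\alpha_{a+n-1}$, while type $C$ goes through the fold/unfold dictionary of Section~\ref{sec-c}. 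So the obstacle you anticipate is modest once the explicit formulas of Section~\ref{sec-gen} are in hand, and your proposal completes to a valid argument.
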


\begin{rem}
There are defining inequalities for the FFLV type polytope provided in \cite{FFL11a, FFL11b, Mak19}, so using the isomorphism one obtains defining inequalities for these particular string polytopes. 
\end{rem}

We provide here the outline of the proof for both types and leave the technical differences to the next sections.
\begin{proof}
For all types $X_n$ and $\lambda \in P^+$, the rational polytopes $P(\lambda)$ \cite{FFL11a, FFL11b} and $Q_{\underline{w}}(\tilde{\lambda})$ \cite{BZ01, Lit98} are convex polytopes in $\R^N$. \\
\textbf{Claim:}\\ There is an affine map $T_{X_n, \lambda}: \R^N \longrightarrow \R^N$ with
\[
T_{X_n, \lambda} (y) = \mathcal{X}_n(y) + t_{X_n, \lambda}
\]
where $\mathcal{X}_n$ is a linear map with determinant $\pm 1$, depending on $X_n$ but not on $\lambda$, $t_{X_n,\lambda}$ a constant vector, depending linearly on $\lambda$, e.g. $t_{X_n, -}$ is a linear assignment $\Z^n \longrightarrow \R^N$. Then 
\[
T_{X_n, \lambda}(P(\lambda)) \subseteq Q_{\underline w}(\tilde{\lambda}).
\]

\medskip
\noindent Proof: We prove the claim for $\lambda$ being a fundamental weight $\omega_i$ in Lemma ~\ref{lem:mono}.\\
Let $\lambda \in P^+$ with a decomposition $\lambda = \sum a_i \omega_i$. Then
\[
P(\lambda) = \sum a_iP(\omega_i)
\]
and hence, by construction of the map $T_{X_n, \lambda}$
$$
\begin{array}{rcl}
T_{X_n, \lambda}(P(\lambda)) & =& \mathcal{X}_n(P(\lambda)) + t_{X_n,\lambda} \\
&= &\sum a_i \mathcal{X}_n (P(\omega_i)) + \sum a_i t_{X_n, \omega_i} \\
& = & \sum a_i  (\mathcal{X}_n (P(\omega_i)) + \sum t_{X_n, \omega_i})
\end{array}
$$
With Lemma~\ref{lem:mono} one deduces
\[
T_{X_n, \lambda}(P(\lambda)) \subseteq \sum a_i Q_{\underline w}(\tilde{\omega_i}) \subseteq Q_{\underline w}(\tilde{\lambda})
\]
while the last relation is due to Proposition~\ref{prop:general-mink}. This proves the claim.\\

\medskip
\noindent\textbf{Claim:}\\ For $\lambda \in P^+$ we have $T_{X_n, \lambda}(P(\lambda)) = Q_{\underline w}(\tilde{\lambda})$.\\
\noindent Proof:
For fixed $\lambda \in P^+$, $ Q_{\underline w}(\tilde{\lambda})$ is a rational polytope. Let $k > 0$ be minimal, such that $k  Q_{\underline w}(\tilde{\lambda})$ is a lattice polytope. Then we have

\begin{eqnarray*}
T_{X_n, k\lambda}(P(k \lambda)) &=& kT_{X_n, \lambda}(P(\lambda)) \\
\text{previous claim} & \subseteq  & k  Q_{\underline w}(\tilde{\lambda}) \\
\text{Proposition}~\ref{prop:general-mink}&  \subseteq & Q_{\underline w}(\tilde{k\lambda})
\end{eqnarray*}
We can consider for all polytopes the lattice points and obtained the very same inequalities with ${}^\Z$. Since for all $\lambda$, $V(\lambda)$ has the same dimension as $V_{w}(\tilde{\lambda})$ (the degenerate module is isomorphic to the Demazure module), we have
\[
\sharp P(k \lambda)^\Z = \sharp Q_{\underline w}(\tilde{k\lambda})^\Z
\]
and since $P(\lambda)$ is a lattice polytope
\[
\sharp k(P(\lambda)^\Z) = \sharp (k  Q_{\underline w}(\tilde{\lambda}))^{\Z}
\]
This implies that all lattice points of $k Q_{\underline w}(\tilde{\lambda})$ are already in $kT_{X_n, \lambda}(P(\lambda))$. Since by assumption, $k Q_{\underline w}(\tilde{\lambda})$ is a lattice polytope, it is spanned by the lattice points in the $k$-times Minkowski sum of $Q_{\underline w}(\tilde{\lambda})$ and hence $k =1$ and $Q_{\underline w}(\tilde{\lambda})$ is a lattice polytope.\\
\end{proof}

\begin{cor}
      For all $\lambda$, the string polytope $ Q_{\underline w}(\tilde{\lambda})$ is a lattice polytope. 
\end{cor}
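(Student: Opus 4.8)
The plan is to deduce this directly from Theorem~\ref{thm:main}, which identifies $Q_{\underline w}(\tilde\lambda)$ with the image of the FFLV polytope under a unimodular affine map. First I would recall from \cite{FFL11a, FFL11b} that $P(\omega_i)$ is a lattice polytope for every fundamental weight, and that $P(\lambda) = \sum_i a_i P(\omega_i)$ for $\lambda = \sum_i a_i\omega_i$; being a Minkowski sum of lattice polytopes, $P(\lambda)$ is itself a lattice polytope. Since the linear part $\mathcal{X}_n$ of $T_{X_n,\lambda}$ has determinant $\pm 1$, the image $\mathcal{X}_n(P(\lambda))$ is again a lattice polytope, and by Theorem~\ref{thm:main} we have $Q_{\underline w}(\tilde\lambda) = T_{X_n,\lambda}(P(\lambda)) = \mathcal{X}_n(P(\lambda)) + t_{X_n,\lambda}$.

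The one point that needs care is the translation vector $t_{X_n,\lambda}$: a priori it is only rational, so I would either verify that $t_{X_n,\lambda} \in \Z^N$ directly from its explicit description in Section~\ref{sec-a} and Section~\ref{sec-c}, or --- more robustly --- reuse the Ehrhart-style counting argument already appearing at the end of the proof of Theorem~\ref{thm:main}. In the latter approach I would fix $\lambda$, take $k>0$ minimal with $k Q_{\underline w}(\tilde\lambda)$ a lattice polytope, and combine $T_{X_n,k\lambda}(P(k\lambda)) = k\,T_{X_n,\lambda}(P(\lambda))$, Proposition~\ref{prop:general-mink}, and the equality $\sharp P(k\lambda)^\Z = \sharp Q_{\underline w}(\widetilde{k\lambda})^\Z$ (which holds because $V^a(k\lambda)$ and $V_w(\widetilde{k\lambda})$ have the same dimension) to conclude that all lattice points of $k Q_{\underline w}(\tilde\lambda)$ already lie in $k\,T_{X_n,\lambda}(P(\lambda)) = k(P(\lambda)^\Z)$. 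As a lattice polytope, $k Q_{\underline w}(\tilde\lambda)$ is the convex hull of the lattice points in the $k$-fold Minkowski sum of $Q_{\underline w}(\tilde\lambda)$, which forces $k=1$.

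I expect the only real obstacle to sit upstream, in Lemma~\ref{lem:mono} and in Theorem~\ref{thm:main} itself --- namely producing the explicit unimodular map and checking $T_{X_n,\omega_i}(P(\omega_i)) \subseteq Q_{\underline w}(\tilde\omega_i)$ type by type. Once those are in hand, the corollary is essentially bookkeeping: unimodular affine images of lattice polytopes are lattice polytopes, and the counting argument removes any worry about integrality of the shift. It is worth emphasising what makes the statement non-trivial in its own right: string polytopes are in general only rational, so the corollary records a genuine special feature of the reduced expressions $\underline w$ chosen here, inherited from the fact that FFLV polytopes are lattice polytopes in types $A$ and $C$.
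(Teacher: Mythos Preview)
Your proposal is correct, and your second option is exactly how the paper handles it: the corollary is not given a separate proof but is extracted from the final lines of the proof of Theorem~\ref{thm:main}, where the counting argument forces $k=1$ and hence that $Q_{\underline w}(\tilde\lambda)$ is a lattice polytope. Your first option is in fact even shorter and equally valid here: Definition~\ref{tau} shows that every coefficient $\alpha^i_{\ell,j}(X_n)$ is $0$, $1$, or $2$, so $t_{X_n,\lambda}\in\Z^N$ for all $\lambda\in P^+$; combined with Proposition~\ref{lem:unimod} and the fact that $P(\lambda)$ is a lattice polytope, the conclusion follows immediately from $Q_{\underline w}(\tilde\lambda)=T_{X_n,\lambda}(P(\lambda))$ without invoking the scaling-and-counting step at all.
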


\section{General setup}\label{sec-gen}
We fix a type and rank $X_n$. Recall, that $N = |R^+|$, the number of positive roots and we label the standard basis of $\R^N$ by $\alpha \in R^+$. We denote 
\[
e_{\alpha} = \begin{cases}  e_{\ell, j} & \text{ if } \alpha = \alpha_\ell + \ldots + \alpha_j, 1 \leq \ell \leq j \leq n  \\ e_{\ell, \bar{j}} & \text { if } \alpha = \alpha_\ell + \ldots + 2 \alpha_j + \ldots \end{cases}
\]

\begin{defn}\label{def cX}
   We set $1 < 2 < \dots < n = \overline{n} < \bar{n-1} < \dots < \bar 1$ and define for $X_n$ a linear map $\mathcal{X}_n: \R^N \longrightarrow \R^N$
\begin{align*}
    \text{if } X=A: \quad e_{a, b} &\mapsto -\sum_{n \geq c \geq b } e_{a,c} - \sum_{1 \leq c < a} e_{c,b} \\
    \text{if } X=C: \quad e_{a, b} &\mapsto - \begin{cases}
        e_{a,\overline{a}} + 2\sum\limits_{1 \leq c < a} e_{c,b}, & \text{if } \overline{a} = b \\
        \sum\limits_{\overline{a} \geq c \geq b} e_{a,c} + \sum\limits_{1 \leq c<a} (e_{c,b} + e_{c,\overline{a}}), &  \text{else.}
    \end{cases} 
\end{align*}
\end{defn}

\begin{exa}\label{X(3)}
For $A_3$, we fix the basis $ A = (e_{1,3},e_{2,3},e_{3,3},e_{1,2},e_{2,2},e_{1,1})$ and for $C_2$, we fix the basis $C= (e_{1,\overline{1}},e_{1,2},e_{2,2},e_{1,1})$, then:
\[
\tensor[^A]{{\cA_3}}{^A} = -\left(\begin{matrix}
1 & 1 & 1 & 1 & 0 & 1\\
0 & 1 & 1 & 0 & 1 & 0\\
0 & 0 & 1 & 0 & 0 & 0\\
0 & 0 & 0 & 1 & 1 & 1 \\
0 & 0 & 0 & 0 & 1 & 0 \\
0 & 0 & 0 & 0 & 0 & 1
\end{matrix}
\right) \quad \tensor[^C]{{\mathcal{C}_2}}{^C} = -\left(\begin{matrix}
1 & 1 & 0 & 1 \\
0 & 1 & 2 & 1 \\
0 & 0 & 1 & 0 \\
0 & 0 & 0 & 1
\end{matrix}
\right)
\]
\end{exa}

\begin{defn}\label{HasDia}
    We set $H(X_n):=\{(\ell,j) \ | \ \alpha_{\ell,j} \in R^+\}$ i.e. indices of the positive roots.
\end{defn}
More explicitly we have
       \begin{equation*}
            H(X_n) = \begin{cases}
                \{(\ell,j) \ | \ 1 \leq \ell \leq j \leq n\}, & \text{ if } X = A \\
                \{(\ell,\overline{j}) \ | \ 1 \leq \ell \leq j \leq n-1\} \cup H(A_n), & \text{ if } X=C
            \end{cases}
        \end{equation*}

\begin{prop}\label{lem:unimod}
    The map $\mathcal{X}_n$ is unimodular, i.e. we have $|\det(\mathcal{X}_n)| = 1$ for $X \in \{A,C\}$. Moreover, the only non-zero coefficients are $-1,-2$ with respect to any basis given by some ordering of the $e_{a,b}$ for $(a,b) \in H(X_n)$.
\end{prop}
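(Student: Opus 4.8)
The plan is to find an ordering of the index set $H(X_n)$ for which the matrix of $\cX_n$ becomes upper triangular with every diagonal entry equal to $-1$. Since reordering the basis merely conjugates the matrix by a permutation matrix, which changes neither $|\det|$ nor the multiset of entries occurring, this ordering-specific computation establishes both claims at once, reading off the $-2$ entries along the way.

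Concretely, I fix the total order on column labels from Definition~\ref{def cX}, that is $1 < 2 < \dots < n = \overline{n} < \bar{n-1} < \dots < \bar 1$, and I order $H(X_n)$ by declaring $(a,b) \prec (a',b')$ when $b$ is strictly larger than $b'$ in this order, or when $b = b'$ and $a < a'$. For $A_3$ and $C_2$ this reproduces precisely the bases $A$ and $C$ of Example~\ref{X(3)}, where the displayed matrices are indeed upper triangular with $-1$ on the diagonal; I will check that this pattern persists in general.

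The heart of the argument is a direct inspection of the formulas in Definition~\ref{def cX}. In every case $\cX_n(e_{a,b})$ is a non-positive integer combination of basis vectors, and I claim that aside from a single diagonal contribution $-e_{a,b}$, every basis vector $e_{a',b'}$ occurring satisfies $(a',b') \prec (a,b)$. Indeed: the vectors $e_{a,c}$ coming from the ``row sum'' over $c$ (from $b$ up to $n$ in type $A$, up to $\bar a$ in type $C$) have, for $c \ne b$, column $c$ strictly later than $b$, so $(a,c) \prec (a,b)$; the vectors $e_{c,b}$ from the ``column sum'' with $c < a$ have the same column $b$ but smaller first index, so $(c,b) \prec (a,b)$; and the extra vectors $e_{c,\bar a}$ with $c<a$ in type $C$ occur only in the branch $b \ne \bar a$, where one verifies from $(a,b) \in H(C_n)$ that $b \preceq \bar a$ in the total order, with equality exactly in the other branch, so here $\bar a$ is strictly later than $b$ and $(c,\bar a) \prec (a,b)$. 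The diagonal vector $e_{a,b}$ arises exactly once, as the $c = b$ term of the row sum, with coefficient $-1$. Triangularity with diagonal $(-1,\dots,-1)$ follows, so $\det \cX_n = (-1)^N$ and $|\det \cX_n| = 1$.

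For the statement on the entries, I note that in type $A$, as well as in the branch $b \ne \bar a$ of type $C$, the sums involved have pairwise disjoint supports (the row sum has first index $a$; the column sums have first index $< a$; and $e_{c,b} \ne e_{c,\bar a}$ precisely because $b \ne \bar a$), so every coefficient lies in $\{0,-1\}$; in the branch $b = \bar a$ of type $C$ the only nonzero coefficients are the diagonal $-1$ and the entries $-2$ in front of $e_{c,b}$ for $c < a$. I expect the only genuinely fiddly point to be the bookkeeping for the $e_{c,\bar a}$ terms in type $C$, in particular the verification that $b \preceq \bar a$ holds in the relevant branch; everything else is immediate from Definition~\ref{def cX}.
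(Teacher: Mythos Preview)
Your proof is correct and follows essentially the same approach as the paper: both establish that, in the basis ordering you describe (which coincides with the paper's $B_n$), the matrix of $\cX_n$ is upper triangular with $-1$'s on the diagonal. The paper verifies this separately for each type by induction on $n$, peeling off a block at a time, whereas you give a uniform direct argument by inspecting which basis vectors can occur in $\cX_n(e_{a,b})$; your presentation is a bit cleaner but the underlying content is the same.
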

 We prove the first part of this proposition in the sections corresponding to types $A,C$ respectively. The second part follows immediately from the definition.

For a fixed $X_n$ and $\lambda \in P^+$ we introduce the translation vector $t_{X_n, \lambda}$ as follows
    \begin{defn}\label{tau}
        Fix $1 \leq i \leq n$. We set
        \begin{equation*}
            t_{X_n,\omega_i} := \sum_{(\ell,j) \in H(X_n)} \alpha^i_{\ell,j}(X_n) e_{\ell,j}
        \end{equation*}
        with
        \begin{equation*}
            \alpha^i_{\ell,j}(A_n) =
                \begin{cases}
                    1, & \text{if } j \geq i \land \ell \leq i \\
                    0, & \text{else.}
                \end{cases}
        \end{equation*}
        \begin{equation*}
            \alpha^i_{\ell,j}(C_n) = \begin{cases}
                1 & \text{if } i\leq j < \overline i \land \ell \leq i\\
                1 & \text{if } \overline \ell = j \land \ell \leq i\\
                2 & \text{if } \overline \ell > j \land \overline i \leq j\\
                0 & \text{else}
            \end{cases}
        \end{equation*}
        
        and for $\lambda = \sum_{j=1}^n a_j \omega_j$, we set $t_\lambda(X_n) = \sum_{j=1}^n a_j t_{\omega_j}(X_n)$ as outlined in the proof of \ref{thm:main}
    \end{defn}
This translation vector is the string sequence that maps the highest weight vector $v_{\tilde{\lambda}}$ to the extremal weight vector $v_{w(\tilde{\lambda})}$. Thus, the computation of the coefficients is straightforward, as they correspond to the coefficients of $w'(\tilde{\lambda})$ for each left subword $w'$ of $w$.
\begin{exa}
For $n=3$, we have 
    \begin{align*}
        t_{A_3, \omega_2} &= e_{1,2} + e_{2,2} + e_{1,3} +e_{2,3} \\
        t_{C_3,\omega_2} &= e_{1,2} + e_{2,2} + e_{1,3} + e_{2,3} + 2e_{1,\overline{2}} + e_{2,\overline{2}} + e_{1,\overline{1}} \\
    \end{align*}
\end{exa}

This introduces the affine map $T_{X_n, \lambda} : \R^N \longrightarrow \R^N$ for the proof of \ref{thm:main}. As outlined in the proof of \ref{thm:main} we are left to prove, that $T_{X_n, \lambda}$ maps lattice points of the FFLV-type polytope to lattice points of the string polytope. And again, as outlined in the proof, it is enough to prove this for $\lambda$ being a fundamental weight.

\begin{lem}\label{lem:mono}
    For $X \in \{A,C\}$ and $1 \leq i \leq n$ set
    \begin{equation*}
    \begin{split}
        T_{X_n,\omega_i}: \R^N &\to \R^N \\
        v &\mapsto -\mathcal X_n(v) + t_{A_n, \omega_i}
    \end{split}
    \end{equation*}
    Then $T_{X_n,\omega_i}(P(\omega_i)^\Z) \subseteq Q_{\underline{w}}(\tilde \omega_i)^\Z$.
\end{lem}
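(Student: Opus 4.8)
The plan is to verify the inclusion $T_{X_n,\omega_i}(P(\omega_i)^\Z)\subseteq Q_{\underline w}(\tilde\omega_i)^\Z$ by explicit computation on both sides, exploiting that for a fundamental weight $\omega_i$ the FFLV polytope $P(\omega_i)$ is a very small, combinatorially transparent object. First I would recall from \cite{FFL11a, FFL11b} the explicit description of $P(\omega_i)^\Z$: in type $A$ the lattice points are indexed by order ideals (or antichains) in the relevant poset on $H(A_n)$, so that $P(\omega_i)$ is a $0/1$-polytope whose vertices are the indicator vectors of certain "Dyck-path'' subsets of positive roots; in type $C$ one gets the analogous marked-chain-polytope description on $H(C_n)$, with entries in $\{0,1,2\}$ reflecting the coefficient $2$ on the long simple root. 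This reduces the statement to a finite check: for each lattice point $v\in P(\omega_i)^\Z$, compute $T_{X_n,\omega_i}(v) = -\mathcal X_n(v) + t_{X_n,\omega_i}$ using Definition~\ref{def cX} and Definition~\ref{tau}, and show the resulting vector satisfies the Berenstein--Zelevinsky string inequalities for $\underline w$ and $\tilde\omega_i$.

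The key steps, in order, are: (1) write down the string polytope $Q_{\underline w}(\tilde\omega_i)$ explicitly from the chosen reduced word $\underline w = \sigma_n\cdots\sigma_1$ (type $A$), resp. $\tau_{2n-1}\cdots\tau_{n+1}\sigma_n\cdots\sigma_1$ (type $C$); since $\tilde\omega_i = \omega_{2i-1}$ is a fundamental weight of $X_{2n-1}$, the string cone inequalities for this word simplify considerably, and the polytope $Q_{\underline w}(\tilde\omega_i)$ is again essentially a $0/1$ (resp. $0/1/2$) polytope. (2) Identify the coordinate bijection between the $N$ positions of the word $\underline w$ and the index set $H(X_n)$ — this is where the labelling $e_{\ell,j}$, $e_{\ell,\bar j}$ and the block structure of $\sigma_j$, $\tau_j$ must be matched up. (3) Check that $-\mathcal X_n$ sends the defining inequalities of $P(\omega_i)$ (a Dyck-path / chain condition) to the string inequalities, and that the shift $t_{X_n,\omega_i}$ — which, as remarked after Definition~\ref{tau}, is precisely the string datum of the extremal weight vector $v_{w(\tilde\omega_i)}$ — corrects the constant terms. (4) Conclude $T_{X_n,\omega_i}(v)\in Q_{\underline w}(\tilde\omega_i)$, and observe integrality is automatic since $\mathcal X_n$ is unimodular with integer entries (Proposition~\ref{lem:unimod}) and $t_{X_n,\omega_i}$ is an integer vector.

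I would organize steps (1)--(3) type by type, deferring the type-$A$ computation to Section~\ref{sec-a} and the type-$C$ computation to Section~\ref{sec-c}, as the paper's outline indicates. The main obstacle I expect is step (2)–(3) in type $C$: the linear map $\mathcal X_n$ there has the genuinely two-case definition with the coefficient $2$ appearing both on $e_{a,\bar a}$ and in the $\sum_{c<a}(e_{c,b}+e_{c,\bar a})$ terms, and the reduced word has the extra $\tau_j$-block; matching the folding/unfolding pattern of the long root against the string inequalities coming from the $\tau$-part is delicate, and one has to be careful that the inequalities one lands in are exactly the facets of $Q_{\underline w}(\tilde\omega_i)$ and not a strictly larger polytope (the reverse inclusion is not needed here, but one wants the bookkeeping tight enough that the later counting argument in the proof of Theorem~\ref{thm:main} forces equality). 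In type $A$, by contrast, the computation should be a reasonably mechanical Dyck-path manipulation. Once Lemma~\ref{lem:mono} is in place, Proposition~\ref{prop:general-mink} and the Minkowski-sum decomposition $P(\lambda)=\sum a_i P(\omega_i)$ upgrade it to all $\lambda$, exactly as in the proof of Theorem~\ref{thm:main} already given.
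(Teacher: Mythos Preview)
Your approach is genuinely different from the paper's. You propose to verify membership in $Q_{\underline w}(\tilde\omega_i)$ by writing down the Berenstein--Zelevinsky string-cone inequalities for the chosen reduced word and checking that $-\mathcal X_n$ transports the FFLV inequalities into them. The paper never computes string-cone inequalities. Instead it uses the Littelmann characterization of the string basis: a lattice point $x$ lies in $Q_{\underline w}(\tilde\omega_i)^\Z$ iff the monomial $\underline f^{\,x}$ acts nonzero on the highest-weight vector of $V(\omega_{2i-1})$ \emph{and} is minimal in the neglex order among all monomials in its $\sim_{2i-1}$-class. So for each FFLV lattice point $p$ the paper proves (i) $\underline f^{\,T(p)}.(e_1\wedge\cdots\wedge e_{2i-1})\neq 0$ by an explicit tableau/shift argument, and (ii) minimality by a weight-space argument exploiting that fundamental $A$-modules are minuscule. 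In type $C$ the paper does not analyze the $\tau$-block string inequalities at all; it embeds $\mathfrak{sp}_{2(2n-1)}$ into $\mathfrak{sl}_{4n-2}$, introduces $\mathrm{fold}/\mathrm{unfold}$ maps between the two sets of monomials, and reduces both (i) and (ii) to the already-proved type-$A$ case.

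Your route is in principle viable, but the step you call (1)---writing down $Q_{\underline w}(\tilde\omega_i)$ explicitly---is where the real work hides: the BZ description via $i$-trails for this particular (non-standard) reduced word is not obviously tractable, and your claim that the inequalities ``simplify considerably'' is exactly what would need to be established. The paper's module-theoretic approach buys you a way around ever confronting the string cone directly; the price is the somewhat delicate minimality argument in (ii). Conversely, if your inequality computation could be carried out cleanly, it would give a purely polyhedral proof and, as a byproduct, explicit facet descriptions of $Q_{\underline w}(\tilde\omega_i)$---something the paper obtains only a posteriori via the isomorphism (see the remark after Theorem~\ref{thm:main}). A minor point: in type $C$ the FFLV lattice points $P(\omega_i)^\Z$ are still $0/1$-vectors (symplectic Dyck paths), not $0/1/2$; the coefficient $2$ lives in $\mathcal X_n$ and $t_{C_n,\omega_i}$, not in $P(\omega_i)$ itself.
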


We set $\Pi_w$ the set of operators
\[
\{ f_{i_1}^{\ell_{1}} \cdots f_{i_N}^{\ell_{N}} \mid \underline w = s_{i_1} \cdots s_{i_N} \}.
\]
It is shown in \cite[Theorem 10.1]{Lit98} and \cite[Theorem 12.4]{Kas95} that, despite the construction of Demazure modules as $U(\lie b)$-modules, $\Pi_{\underline{w}}.v_{\lambda}$ is a generating set for $V_w(\lambda)$ for all $\lambda$.
We will use this in the following and find within $\Pi_w$ the subset that gives the string bases of 
$V_w(\lambda)$. As explained above, we can restrict ourselves to fundamental weights.
\begin{defn}\label{sim}
        For $1 \leq i \leq 2n-1$ and $f,g \in \Pi_{\underline{w}}$. We define
        \begin{equation*}
            f \sim_{i} g :\Leftrightarrow \exists r \in \Q_+ \forall v \in \Lambda^{i}\C^d : r \cdot f(v) = g(v) \quad \text{for } d=\left\{ \begin{array}{lr}
                n+1, & \text{if } X=A \\
                2n, & \text{if } X=C
            \end{array}\right.  
        \end{equation*}
        For $a, b \in \Z^N$, we say $a \sim_i b$ if $f^a \sim_i f^b$. It follows from the definition, that $\sim_i$ is an equivalence relation.
    \end{defn}

Since $V_w(\omega_i)$ can be realized as a submodule of $\Lambda^{i}\C^d$, if $f\sim_i g$, then $f$ and $g$ operate the same (up to a nonzero scalar) on $V_w(\omega_i)$.
\begin{prop}\label{comm}
        For $1 \leq i,l,j, \leq n$ and $d$ as above the following are equivalent:
        \begin{enumerate}
            \item $|l-j| \neq 1$
            \item $f_l f_j (v) = f_j f_l (v)$ for all $v \in \C^{d}$
            \item $f_l f_j \sim_i f_j f_l$
        \end{enumerate}
\end{prop}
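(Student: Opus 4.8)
\textbf{Proof proposal for Proposition~\ref{comm}.}
The plan is to establish the cycle of implications $(1)\Rightarrow(2)\Rightarrow(3)\Rightarrow(1)$. The equivalence $(1)\Leftrightarrow(2)$ is essentially a computation with the defining matrices of the root vectors $f_l,f_j$ acting on the standard representation $\C^d$. In type $A$ (with $d=n+1$), $f_j$ acts as the elementary matrix $E_{j+1,j}$ (up to normalization), so $f_lf_j-f_jf_l$ is supported on the product $E_{l+1,l}E_{j+1,j}\pm E_{j+1,j}E_{l+1,l}$, which vanishes precisely when $\{l,l+1\}\cap\{j,j+1\}=\emptyset$, i.e.\ when $|l-j|\ne 1$ (the case $l=j$ being trivial). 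In type $C$ (with $d=2n$) one uses the standard realization of $\lie{sp}_{2n}$ and the fact that for $1\le l,j\le n$ the vectors $f_l,f_j$ are again root vectors for the $\lie{sl}_n$-subalgebra sitting in the ``upper left'' block, so the same elementary-matrix computation applies; one only needs to check that the extra entries forced by the symplectic form do not interact for indices in this range, which is immediate since $l,j\le n$ keeps us away from the antidiagonal block. This handles $(1)\Leftrightarrow(2)$.

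For $(2)\Rightarrow(3)$: if $f_lf_j$ and $f_jf_l$ agree as operators on $\C^d$, then they induce the same derivation-type action on each exterior power $\Lambda^i\C^d$ via the comultiplication (each $f$ acts on $\Lambda^i\C^d$ as $\sum_k 1^{\otimes(k-1)}\otimes f\otimes 1^{\otimes(i-k)}$ restricted to antisymmetric tensors). Since the two composite operators on $\C^d$ are literally equal, the induced second-order operators on $\Lambda^i\C^d$ are also literally equal, so in particular $f_lf_j\sim_i f_jf_l$ with scalar $r=1$. This direction needs only that ``$f\mapsto$ action on $\Lambda^i\C^d$'' is an algebra homomorphism from $U(\lie n^-)$, which is standard.

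The only real content is $(3)\Rightarrow(1)$, equivalently $\neg(1)\Rightarrow\neg(3)$: if $|l-j|=1$ we must exhibit some $i$ and some $v\in\Lambda^i\C^d$ on which $f_lf_j$ and $f_jf_l$ differ by more than a scalar. I expect $i=1$ (the vector representation itself) to suffice: taking $l=j+1$, one computes on the basis vector $v_j$ that $f_{j+1}f_j\cdot v_j = v_{j+2}$ while $f_jf_{j+1}\cdot v_j=0$ (indices up to normalization), and then checking a second basis vector, e.g.\ $v_{j+1}$, gives $f_jf_{j+1}\cdot v_{j+1}=v_{j+2}$ while $f_{j+1}f_j\cdot v_{j+1}=0$; these two evaluations are inconsistent with any single global scalar $r$, so $f_lf_j\not\sim_1 f_jf_l$, hence $f_lf_j\not\sim_i f_jf_l$ fails for this $i$ and $\neg(3)$ holds. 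The main obstacle is bookkeeping: one must pin down the precise matrix entries of the $f_j$ in the chosen realizations (and in type $C$ verify the range $l,j\le n$ really does reduce to the $\lie{sl}_n$ computation), and make sure the two chosen test vectors in the $|l-j|=1$ case genuinely rule out a common scalar rather than just a pointwise one. Neither is deep, but both require care with normalizations.
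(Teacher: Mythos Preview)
Your implications $(1)\Rightarrow(2)\Rightarrow(3)$ are fine and essentially match the paper (the paper appeals to the Dynkin diagram rather than to the explicit elementary matrices for $(1)\Rightarrow(2)$, but this is the same argument).

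The gap is in $(3)\Rightarrow(1)$. In the statement, $i$ is \emph{fixed}: the proposition asserts that for each triple $(i,l,j)$ in the stated range the three conditions are equivalent. Thus the contrapositive of $(3)\Rightarrow(1)$ reads: if $|l-j|=1$, then $f_lf_j\not\sim_i f_jf_l$ \emph{for the given $i$}. You are not free to ``exhibit some $i$'' and in particular cannot simply take $i=1$; you must produce a witness in $\Lambda^{i}\C^{d}$. The paper does exactly this: for $j=l+1$ it writes down an explicit decomposable vector $w\in\Lambda^{i}\C^{d}$ (with two cases according to whether $i>l$ or not) on which $f_{l+1}f_l(w)=0$ while $f_lf_{l+1}(w)\neq 0$. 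A single such vector already rules out every positive scalar $r$, since $r\cdot 0=0$.

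Two smaller remarks. First, your computation on $v_{j+1}$ is incorrect: with $f_k=E_{k+1,k}$ one has $f_jf_{j+1}(v_{j+1})=f_j(v_{j+2})=0$, not $v_{j+2}$; so both composites vanish on $v_{j+1}$ and that vector gives no information. Second, a second test vector is unnecessary anyway: the single vector $v_j$ (where one composite is nonzero and the other is zero) already excludes all $r\in\Q_+$. Your $i=1$ idea can be salvaged for general $i$ by wedging $v_j$ with $i-1$ basis vectors whose indices avoid $\{j,j+1,j+2\}$; this is precisely what the paper's explicit $w$ accomplishes.
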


    \begin{proof}
         The implication $i) \Rightarrow ii)$ follows from general theory, if two nodes are not linked in the Dynkin diagram, their corresponding $\mathfrak{sl}_2$-subalgebras form direct sums, hence all elements commute. In type $A,C$, two nodes are linked iff they are consecutive.
         
        $ii) \Rightarrow iii)$ is clear.
        
        $iii) \Rightarrow  i)$:
        
            Let $r \in \R_+$. Without loss of generality, let $j = l+1 \leq n$, and choose
            \begin{equation*}
                w:= \begin{cases}
                    e_1 \wedge \dots \wedge e_{l+1} \wedge e_{l+3} \wedge \dots \wedge e_{i+1} & \text{if } i > l \\
                    e_1 \wedge \dots \wedge e_{i-2} \wedge e_l \wedge e_{l+1} & \text{else}
                \end{cases} \in \Lambda^{i} \C^{d}
            \end{equation*}
            we have
            \begin{equation*}
                r \cdot f_{l+1}f_l(w) = 0 \neq f_l f_{l+1}(w) = \begin{cases}
                    e_1 \wedge \dots \wedge e_{l-1} \wedge e_{l+1} \wedge \dots \wedge e_{i+1} & \text{if } i > l \\
                    e_1 \wedge \dots \wedge e_{i-2} \wedge e_{l+1} \wedge e_{l+2} & \text{ else}
                    \end{cases}
            \end{equation*}
    \end{proof}

    \begin{nota}
        Let $x = \sum \alpha_{a,b} e_{a,b} \in \Z^{N}_{\geq 0}$, if $X = A$, we introduce $\alpha_{\ell,j} = 0$ for $\overline{n-1} \leq j \leq \overline{1}$ such that in what follows we need not make a distinction between types.
        \begin{align*}
            \underline{f}^x := &f_{2n-1}^{\alpha_{1,\overline{1}}} (f_{2n-2}^{\alpha_{1,\overline 2}} f_{2n-1}^{\alpha_{2,\overline{2}}}) \dots (f_{n+1}^{\alpha_{1,\overline{n-1}}} \dots f_{2n-1}^{\alpha_{n-1,\overline{n-1}}}) \\
            &(f_{n}^{\alpha_{1,n}} f_{n+1}^{\alpha_{2,n}}\dots f_{2n-1}^{\alpha_{n,n}}) (f_{n-1}^{\alpha_{1,n-1}} \dots f_{2n-3}^{\alpha_{n-1,n-1}}) \dots (f_2^{\alpha_{1,2}} f_3^{\alpha_{2,2}}) f_1^{\alpha_{1,1}} 
        \end{align*}
        and for $i,m\in \N$, $v\in \Lambda^i\C^m$ we set $x.v := \underline{f}^x(v)$.
    \end{nota}

    \begin{defn}\label{ordering}
       We define an ordering on $H(X_n)$, as follows:
        \begin{align*}
            (a,b) \preceq (c,d) :\Leftrightarrow b < d \lor (b = d \land a \geq c)
        \end{align*} 
    \end{defn}
    This is the reverse-lex order where in the last component we have the usual $<$ and in the first component it is reversed i.e. $>$.
    For $A_3$ the ordering reads as follows:
    
    \begin{exa}
        The ordering on $H(A_3) = \{e_{1,1},e_{1,2},e_{2,2},e_{1,3},e_{2,3},e_{3,3}\}$ is
        \begin{align*}
            e_{1,1} \preceq e_{2,2} \preceq e_{1,2} \preceq e_{3,3} \preceq e_{2,3} \preceq e_{1,3}
        \end{align*}

        The points in $H(X_n) \subseteq \Z^N$ correspond to simple root operators, the ordering represents the structure of the reduced expression $\underline w$ on the level of points:
        \begin{align*}
            \underline w = s_3s_4s_5s_2s_3s_1 \quad f_3 f_4 f_5 f_2 f_3 f_1 = \underline f^{e_{1,3}} \underline f^{e_{2,3}} \underline f^{e_{3,3}} \underline f^{e_{1,2}} \underline f^{e_{2,2}}\underline f^{e_{1,1}}
        \end{align*}
        
    \end{exa}

\section{Type A}\label{sec-a}

\begin{proof}[Proof of \ref{lem:unimod} type A]
    For $n \in \N$, let
    \begin{align*}
        B_n := (e_{1,n},\dots,e_{n,n},e_{1,n-1},\dots,e_{n-1,n-1},\dots,e_{1,2},e_{2,2},e_{1,1})
    \end{align*}
    This ordering is a decreasing chain with respect to $\preceq$. 
    We show by induction on $n \in \N$ that $\tensor[^{B_n}]{(-\cA_n)}{^{B_n}}$ is upper triangular with determinant $\pm 1$.

    If $n=1$, then $\tensor[^{B_1}]{(-\cA_1)}{^{B_1}} = (-1)$.

    For the induction step note that $B_n = (e_{1,n},\dots,e_{n,n},B_{n-1})$, hence:
    \begin{equation*}
         \tensor[^{B_n}]{(-\cA_n)}{^{B_n}} = \left(\begin{array}{c|c}
             \star & \star \\ \hline
              \text{\raisebox{-1.5mm}{$\star$}} & \text{ \raisebox{-2mm}{$\tensor[^{B_{n-1}}]{(-\cA_{n-1})}{^{B_{n-1}}}$} }
         \end{array}\right)
    \end{equation*}
    to determine the left side of the matrix, fix $1 \leq j \leq n$ such that $e_{j,n} \in H(A_n)$. Then
    \begin{equation*}
        \cA_n(e_{j,n}) = -\sum_{c=1}^j e_{c,n}
    \end{equation*}
    and thus $ \tensor[^{B_n}]{(-\cA_n)}{^{B_n}}$ is upper triangular with the first block having $-1$s on the diagonal. 
    By induction we deduce $|\det(-\cA_n)| = |\det (-\cA_{n-1})| = 1$.
\end{proof}

We make a few general observations that will aid us in the next proof.

\begin{prop}\label{restrA}
    If we fix the weight $\omega_{2i-1}$, i.e. we fix some $i$, then we can restrict all computations to a subword of $\underline{w}$, namely
    \begin{align*}
        w := s_n \cdots s_{n+i-1}\cdots s_{i+1} \cdots s_{2i}s_i \cdots s_{2i-2}s_{2i-1} 
    \end{align*}
    and $Q_{\underline w}(\tilde \omega_i) = Q_{w}(\tilde \omega_i) \times \{0\}^{m} \subseteq \{0,1\}^N$, where $m = N - i(n-i+1)$. 
\end{prop}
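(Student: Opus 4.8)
The plan is to run everything inside the standard minuscule model of $V(\tilde\omega_i)=V(\omega_{2i-1})$ for $\tilde{\lie g}=A_{2n-1}=\lie{sl}_{2n}$, namely $\Lambda^{2i-1}\C^{2n}$ with basis $e_J=e_{j_1}\wedge\dots\wedge e_{j_{2i-1}}$ indexed by the $(2i-1)$-subsets $J=\{j_1<\dots<j_{2i-1}\}\subseteq\{1,\dots,2n\}$. Here $v_{\tilde\omega_i}=e_{J_0}$ with $J_0=\{1,\dots,2i-1\}$, the simple root vector $f_k$ sends $e_J$ to $\pm e_{(J\setminus\{k\})\cup\{k+1\}}$ when $k\in J$, $k+1\notin J$ and to $0$ otherwise, so $f_k^2=0$ on this module and any product of $f_k$'s sends a standard basis vector to $\pm$(a standard basis vector) or to $0$. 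As $\omega_{2i-1}$ is minuscule, all string coordinates lie in $\{0,1\}$, which already gives the inclusion $Q_{\underline w}(\tilde\omega_i)\subseteq\{0,1\}^N$.

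First I would record the combinatorics of the subword. Writing $\sigma_j=s_js_{j+1}\cdots s_{2j-1}$, the word $w$ is obtained from $\underline w=\sigma_n\cdots\sigma_1$ by keeping, for every $j\geq i$, the first $i$ letters $s_j,\dots,s_{j+i-1}$ of $\sigma_j$ and deleting everything else — i.e. the last $j-i$ letters of $\sigma_j$ for $j\geq i$ together with all of $\sigma_j$ for $j<i$. Counting, the number of deleted letters is $N-\sum_{j=i}^{n}i=N-i(n-i+1)=m$, and reading the kept letters in the order they appear in $\underline w$ produces exactly the displayed expression $w=(s_n\cdots s_{n+i-1})(s_{n-1}\cdots s_{n+i-2})\cdots(s_i\cdots s_{2i-1})$.

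The heart of the proof is that every deleted letter contributes a coordinate that is forced to vanish. A point of $\Pi_{\underline w}.v_{\tilde\omega_i}$ is built by applying the operators of $\underline w=s_{i_1}\cdots s_{i_N}$ to $e_{J_0}$ from right to left; by the remark above, after any initial stretch of operators the running vector is $\pm e_J$ for a single $J$ (or $0$). I would maintain the invariant: just before the block $\sigma_j$ is processed (i.e. after $\sigma_1,\dots,\sigma_{j-1}$), every subset $J$ that can occur satisfies $\max J\leq j+i-1$, and moreover $J=J_0$ for $j\leq i$. The base case $j=i$ holds because $\max J_0=2i-1=i+i-1$, and for $i\geq2$ every letter $f_k$ occurring in $\sigma_1,\dots,\sigma_{i-1}$ kills $e_{J_0}$ (both $k$ and $k+1$ lie in $J_0$), so nothing is produced before $\sigma_i$. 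For the step with $j\geq i$: inside $\sigma_j$ the deleted operators are the $f_k$ with $j+i\leq k\leq 2j-1$, and since $k\geq j+i>j+i-1\geq\max J$ each such $f_k$ annihilates every admissible $e_J$, so its exponent must be $0$; the kept operators $f_{j+i-1},\dots,f_j$ each raise $\max J$ by at most $1$, so afterwards $\max J\leq j+i=(j+1)+i-1$, which is the invariant before $\sigma_{j+1}$. This shows at once that every lattice point of $Q_{\underline w}(\tilde\omega_i)$ vanishes on all deleted coordinates and that $V_{\underline w}(\tilde\omega_i)$ is already spanned by the operators appearing in $w$ applied to $v_{\tilde\omega_i}$, so that $V_{\underline w}(\tilde\omega_i)=V_w(\tilde\omega_i)$ and all computations may be carried out with $w$.

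To finish, once the deleted coordinates are pinned to $0$, applying $\underline f^x$ with $x$ supported on the kept positions is literally the computation defining the string parametrization for the word $w$ at $v_{\tilde\omega_i}$; hence the lattice points of $Q_{\underline w}(\tilde\omega_i)$ are exactly those of $Q_w(\tilde\omega_i)$, re-indexed along the kept coordinates and padded by zeros. Since both polytopes are string polytopes of the minuscule weight $\tilde\omega_i$, they lie in $\{0,1\}^N$ and equal the convex hull of their lattice points, so this identification of lattice-point sets upgrades to $Q_{\underline w}(\tilde\omega_i)=Q_w(\tilde\omega_i)\times\{0\}^m$. The step I expect to be the main obstacle is the inductive bookkeeping above — in particular keeping track of how the deleted and kept operators interleave inside each block $\sigma_j$ so that the ``$\max J$'' bound is never violated — together with being careful about the convention identifying the operator word $\underline f^x$ (applied right to left) with the string parametrization, so that ``the operator annihilates the current basis vector'' really does force ``the corresponding string coordinate is $0$''.
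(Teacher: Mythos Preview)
Your argument is correct and follows the same approach as the paper's own proof. Both work inside the minuscule model $\Lambda^{2i-1}\C^{2n}$, use $f_k^2=0$ to bound all coordinates by $1$, and then show that the operators at positions $(\ell,j)$ with $j<i$ or $\ell>i$ necessarily annihilate the running basis vector, forcing those coordinates to vanish. The only difference is one of explicitness: the paper dispatches the case $\ell>i$ in a single word (``analogously''), whereas you spell out the block-by-block invariant $\max J\le j+i-1$ before processing $\sigma_j$, which is precisely what that ``analogously'' is hiding.
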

\begin{proof}
    If $x = \sum \alpha_{a,b} e_{a,b} \in \Z^N_{\geq 0}$ such that $\underline{f}^x.(e_1 \wedge \dots \wedge e_{2i-1}) \neq 0$ for some $i$, then $\alpha_{a,b} \leq 1$ for all $a,b$, since $f_j^2$ acts by $0$ on $V(\omega_{2i-1})$. Furthermore, we need $\alpha_{\ell,j} = 0$ for all $j < i$, since we first have to operate with $f_{2i-1}$ on $e_1 \wedge \dots \wedge e_{2i-1}$. The first (with respect to the order \ref{ordering}) non-zero action is at the index $(i,i)$, hence at all $(\ell,j) \prec (i,i)$ the coefficients $\alpha_{\ell,j}$ have to be $0$. Analogously, we see that $\alpha_{\ell,j} = 0$ if $\ell > i$.
\end{proof}

Each point of $Q_{\underline w}(\tilde\omega_{i})$ can therefore be written in a tableau of format $i \times (n-i+1)$ with entries $0,1$, where the $\ell,j$'th entry of the tableau corresponds to the positive root $\alpha_{\ell,j+i-1}$.

\begin{exa}
    Let $n=4, i=2$ and $x = e_{1,2} + e_{2,2} + e_{1,3} + e_{2,3} + e_{2,4}$. Then the tableau of $x$ is given by

    \[
    \begin{blockarray}{ccc}
        \begin{block}{|ccc|} \hline
            1 & 1 & 0 \\
            1 & 1 & 1 \\ \hline
        \end{block}
    \end{blockarray}
    \]
\end{exa}

We can also interpret the tableau as some monomial in the root operators. The index of the root operator for an entry in the tableau is given as follows:

$$
\begin{blockarray}{cccc}
    \begin{block}{|cccc|} \hline
        i & i+1 & \cdots & n \\ 
        i+1 & i+2 & \cdots & n+1   \\ 
        \vdots & \vdots & \ddots & \vdots \\ 
        2i-1 & 2i & \cdots & n+i-1 \\ \hline  
    \end{block}
\end{blockarray}
\begin{array}{c|c|c|c}

\end{array}
$$
It should be noted that this diagram is supposed to be read column-wise from bottom to top and left to right to get the correct order of the root operators given in \ref{ordering}.

\begin{exa}
    The operator corresponding to the tableau in the previous example is $f_5f_3f_4f_2f_3$.
\end{exa}
We turn to the proof of Lemma~\ref{lem:mono} for type $A$.
\begin{proof}[Proof of \ref{lem:mono} \textit{a})]
    We show two parts: For all $p \in P(\omega_i)^\Z$, the point $T_{A_n,\omega_i}(p) \in \Z^N$  
    \begin{enumerate}
        \item does not annihilate $e_1 \wedge \dots \wedge e_{2i-1}$,
        \item is minimal, i.e. the smallest among all $\underline{f} \in \Pi_w$ with $\underline f \sim_{2i-1} T_{A_n,\omega_i}(p)$.
    \end{enumerate}

\medskip 
\noindent Set $\tau = t_{A_n, \omega_i}$ and $T = T_{A_n,\omega_i}$.
    Let $e_{\alpha_{\ell_1,j_1}} + \dots + e_{\alpha_{\ell_s,j_s}} =: p \in P(\omega_i)^{\Z}$ with
    $$\ell_s < \ell_{s-1} < \ldots < \ell_1 \leq i \leq j_1 < \ldots < j_s$$

\noindent\textbf{Proof of (i):}
    We first show by induction on $s$, that for any $p\in P(\omega_i)^{\Z}$ there exists $q_p \sim_{2i-1} \tau + \cA_n(p)$ where
    \begin{align*}
        q_p = \sum_{k=1}^i \sum_{l=i}^{c^p_k} e_{k,l}
    \end{align*} for $c_k^p\in \N$ with $j_1=c^p_1\leq c^p_2 \leq \dots \leq c^p_i$, where we set $j_1:=n$ if $s=0$.

    The case $s=0$ is trivial, here $T(p)=\tau$ and $c^p_k=n$ for all $1\leq k\leq i$. Now consider $s>0$. Set $\alpha:=\alpha_{\ell_1,j_1}$. By the induction hypothesis, the statement holds for $p-\alpha$.
    We introduce a linear map 
    \begin{align*}
        \text{shift}: \R^N \to \R^N:
         e_{a,b} \mapsto \begin{cases}e_{a+1, b-1} & \text{if }a<\ell_1 \land b>j_1\\ e_{a,b} & \text{else}\end{cases}
    \end{align*}
    
   \noindent Then by the induction hypothesis, there are $c_k^{p-\alpha} \in \N$ such that $T(p-\alpha) \sim_{2i-1} q_{p-\alpha}$
    and we have
    \begin{align*}
        T(p)&= \tau + \cA_n(p-\alpha) + \cA_n(\alpha)\\
        &= T(p-\alpha) + \cA_n(\alpha)\\
        &\sim_{2i-1} q_{p-\alpha} + \cA_n(\alpha) \\
        &\overset{\ref{comm}}\sim_{2i-1}\text{shift}(q_{p-\alpha} + \cA_n(\alpha))= q_p
    \end{align*}

The last $\sim_{2i-1}$-step can be visualized using the tableau-notation from before:
\vspace{1.2cm}
    \[
\begin{blockarray}{cccccccccccccc}
\begin{block}{|cccccccccccccc|} \hline
  1 & \cdots & 1 & \tikzmark{j1}{$0$} & \tikzmark{shiftul}{1} & \cdots & \tikzmark{c1}{1} & 0 & & 0 & & 0 & \cdots & \tikzmark{shiftur}{0} \\
  1 & \cdots & 1 & \tikzmark{img_ar_ul}{$0$} & 1 & \cdots & 1 & \cdots & \tikzmark{c2}{$1$} & 0 & & 0 & \cdots & 0\\
  \vdots & \ddots  & \vdots & \vdots & \vdots & \ddots & \vdots & \ddots  & \vdots & & & \vdots & \ddots & \vdots \\
  1 & \cdots & 1 & 0 & \tikzmark{shiftdl}{$1$} & \cdots & 1 & \cdots & 1 & \cdots & \tikzmark{ci}{$1$} & 0 & \cdots & \tikzmark{shiftdr}{$0$} \\
  1 & \cdots & 1 & \tikzmark{img_ar_dl}{$0$} & 0 & \cdots & 0 & \cdots & 0 & \cdots & 0 & 0 & \tikzmark{img_ar_dr}{$\cdots$} & \tikzmark{i}{$0$}\\
  1 & \cdots & 1 & 1 & 1 & \cdots & 1 & \cdots & 1 & \cdots & 1 & 1 & \cdots & 1 \\
  \vdots & \ddots & \vdots & \vdots & \vdots & \ddots & \vdots & \ddots & \vdots & \ddots & \vdots & \vdots & \ddots & \vdots \\
  1 & \cdots & 1 & 1 & 1 & \cdots & 1 & \cdots & 1 & \cdots & 1 & 1 & \cdots & 1 \\ \hline
\end{block}
\end{blockarray} 
\]

\tikz[overlay,remember picture] {
  \node[above=(.3cm of j1), font=\itshape, name=free1] {$j_1$};
  \node[above=(.3cm of c1), font=\itshape, name=freec1]{$ \quad c_1^{p-\alpha}$};
  \node[above=(.73cm of c2), font=\itshape, name=freec2]{$ \quad c_2^{p-\alpha}$};
  \node[above=(1.73cm of ci), font=\itshape, name=freeci]{$ \quad c_{\ell_1-1}^{p-\alpha}$};
  \node[right=(.3cm of i), font=\itshape, name=freei]{$\ell_1$};
  \node[left=(-.14cm of shiftul), yshift=.14cm, name=ul]{};
  \node[left=(-.14cm of shiftdl), yshift=-.14cm, name=dl]{};
  \node[right=(-.14cm of shiftur), yshift=.14cm, name=ur]{};
  \node[right=(-.14cm of shiftdr), yshift=-.14cm, name=dr]{};
  \draw[->] (free1) to (j1);
  \draw[->] (freec1) to (c1);
  \draw[->] (freec2) to (c2);
  \draw[->] (freeci) to (ci);
  \draw[->] (freei)  to (i);
  \draw[blue] (dl) rectangle (ur);
  \draw[->,blue] (shiftul) to (img_ar_ul);
  \draw[->,blue] (shiftdl) to (img_ar_dl);
  \draw[->,blue] (shiftdr) to (img_ar_dr);
}
\noindent The blue block will be shifted in the direction of the arrows:
\vspace{1.2cm}
\[
\begin{blockarray}{cccccccccccc}
\begin{block}{|cccccccccccc|} \hline
  1 & \cdots & \tikzmark{c1}{1} & \tikzmark{j1}{0} & \cdots & & 0 & & 0 & \cdots & 0 & 0 \\
  1 & \cdots & 1 & \tikzmark{shiftul}{1} & \cdots & \tikzmark{c2}{1} & 0 & & 0 & \cdots & \tikzmark{shiftur}{0} & 0 \\
  \vdots & \ddots  & \vdots & \vdots & \ddots & \vdots & & & \vdots & \ddots & \vdots & \vdots \\
  1 & \cdots & 1 & \tikzmark{shiftdl}{1} & \cdots & 1 & \cdots & \tikzmark{ci}{1} & 0 & \cdots & \tikzmark{shiftdr}{0} & \tikzmark{i}{0}\\
  1 & \cdots & 1 & 1 & 1 & \cdots & 1 & \cdots & 1 & \cdots & 1 & 1 \\
  \vdots & \ddots & \vdots & \vdots & \vdots & \ddots & \vdots & \ddots & \vdots & \ddots & \vdots & \vdots \\
  1 & \cdots & 1 & 1 & 1 & \cdots & 1 & \cdots & 1 & \cdots & 1 & 1\\ \hline
\end{block}
\end{blockarray} 
\]

\tikz[overlay,remember picture] {
  \node[above=(.3cm of j1), font=\itshape, name=free1] {$j_1$};
  \node[above=(.6cm of c1), font=\itshape, name=freec1]{$c_1^{p}= j_1-1$};
  \node[above=(.4cm of c2), font=\itshape, name=freec2]{$c_2^{p}=c_1^{p-\alpha}-1$};
  \node[above=(2.03cm of ci), font=\itshape, name=freeci]{$ c_{\ell_1}^{p}=c^{p-\alpha}_{\ell_1-1}-1$};
  \node[right=(.3cm of i), font=\itshape, name=freei]{$\ell_1$};
    \node[left=(-.14cm of shiftul), yshift=.14cm, name=ul]{};
  \node[left=(-.14cm of shiftdl), yshift=-.14cm, name=dl]{};
  \node[right=(-.14cm of shiftur), yshift=.14cm, name=ur]{};
  \node[right=(-.14cm of shiftdr), yshift=-.14cm, name=dr]{};
  \draw[->] (free1) to (j1);
  \draw[->] (freec1) to (c1);
  \draw[->] (freec2) to (c2);
  \draw[->] (freeci) to (ci);
  \draw[->] (freei)  to (i);
  \draw[blue] (dl) rectangle (ur);
}
    As we saw, the $\text{shift}$ map moves the \textit{upper right} $\ell_1 \times j_1$-block by one to the bottom left, so we set
    \begin{align*}
        c_k^p = \begin{cases}
            j_1, & \text{if } k = 1 \\
            c_{k-1}^{p-\alpha}-1 & \text{if } 2 \leq k \leq \ell_1 \\
            n & \text{else.}
        \end{cases}
    \end{align*}
    The inequalities from the induction hypothesis get carried over. With $\ref{comm}$ we can reorder each monomial into a monomial in the word
    \begin{align*}
        w^t:=(s_n \cdots  s_i) \cdots (s_{n+i-1} \cdots s_{2i-1})
    \end{align*}
    which translates to reading the tableau row-wise (left to right and bottom to top) instead.\\
    In our setup, each row only acts on one of the \textit{wedge}-components, that is the $m$-th row sends $e_{i+m-1}$ to $e_{m+c_m^p}$ and fixes the rest.
    Since $m+c_m^p \leq m+c_{m+1}^p < m+1+c_{m+1}^p$, the resulting vector $f^{\tau+\cA_n(p)}.e_1 \wedge \dots \wedge e_{2i-1} = e_1 \wedge \dots \wedge e_{i-1} \wedge e_{1+c_1^p} \wedge \dots \wedge e_{i+c_i^p}$ does not vanish.

\noindent\textbf{Proof of (ii):}
    All weight spaces in $V_{w}(\omega_{2i-1})$ are one-dimensional as all fundamental modules in type $A$ are minuscule, so we are left to check that $T_{A_n,\omega_i}(p)$ is minimal among all monomial of this given weight that act non-zero on $v_{\omega_{2i-1}}$.  \\
    For $s=0$ the minimality follows immediately, since $T(p)=x$ is the only vector with the corresponding weight, that does not annihilate $e_1\wedge \cdots \wedge e_{2i-1}$, which follows from \ref{restrA}.\\
    We first consider the case $s=1$, say $\ell_1 \leq n \leq j_1$. We are then considering the weight space $w(\omega_{2i-1}) + \alpha_{\ell_1} + \alpha_{\ell_1 +1} +\ldots + \alpha_{j_1}$. 
    This implies that for all $s_k$ in $\underline w$ with $k \geq \ell_1 + n$, the coordinate in any non-zero acting monomial is equal to $1$. \\
    Now let us consider $s_{n+i-1}$, then $f_{n+i-1}$ should act non-zero, enforcing an operator $f_{n+i-1} \cdots f_{2i-1}$.
    Thus the bottom line in the tableau is filled with $1$. Inductively, all bottom rows, containing $f_{\ell_1+n}$ are filled with $1$. 
    The operator $f_{\ell_1}+n-1$ appears in the remaining tableau only once, in the last column. Due to weight reasons, the entry in the tableau is $0$.
    The operator $f_{\ell_1+n-2}$ appears twice, to the top of the $0$ and to the left of the $0$. 
    Since we are considering the minimal monomial with respect to the negative lexicographic order (which orders the operators first by column from bottom to top), we put $1$ in the last column. Inductively we obtain the tableau
    \begin{align*}
         \begin{blockarray}{cccccccc}
        \begin{block}{|cccccccc|} \hline
            1 & 1 & \cdots &1 & 1 & 1& \cdots &1 \\
            1 & 1 & \cdots &1 & 1 & 1& \cdots &1 \\
            \vdots & \vdots & \ddots & \vdots & \vdots & \vdots& \ddots &\vdots \\
            1 & 1 & \cdots &1 & 1 & 1& \cdots &1 \\
            1 & 1 & \cdots &1 & 0 & 0& \cdots &0 \\
            1 & 1 & \cdots &1 & 1 & 1& \cdots &1 \\
            \vdots & \vdots & \ddots & \vdots & \vdots & \vdots& \ddots &\vdots \\
            1 & 1 & \cdots &1 & 1 & 1& \cdots &1 \\ \hline
        \end{block}
    \end{blockarray}
    \end{align*}
    where the leftmost 0 is in the $j_1$-th column.
    We are dualizing our argument to the column by considering the order given by $w^t$ and get
    $$
    \begin{blockarray}{cccccccc}
        \begin{block}{|cccccccc|} \hline
             1 & 1 & \cdots &1 & 0 & 1& \cdots &1 \\
            1 & 1 & \cdots &1 & 0 & 1& \cdots &1 \\
            \vdots & \vdots & \ddots & \vdots & \vdots & \vdots& \ddots &\vdots \\
            1 & 1 & \cdots &1 & 0 & 1& \cdots &1 \\
            1 & 1 & \cdots &1 & 0 & 0& \cdots &0 \\
            1 & 1 & \cdots &1 & 1 & 1& \cdots &1 \\
            \vdots & \vdots & \ddots & \vdots & \vdots & \vdots& \ddots &\vdots \\
            1 & 1 & \cdots &1 & 1 & 1& \cdots &1 \\ \hline
        \end{block}
    \end{blockarray}
    $$
    which corresponds to a minimal monomial.
    
    \medskip
    We proceed by induction on $s$ to  $1 \leq \ell_1 < \ell_2 < \cdots < \ell_s \leq i \leq j_s < \ldots < j_1 \leq n$. By induction, we can assume that the minimal monomial for the sequence $1 \leq \ell_1 < \ell_2 < \cdots < \ell_{s-1} \leq i \leq j_{s-1} < \ldots < j_1 \leq n$ corresponds to the tableau with $0$-hooks (as for the induction start) defined by the pairs $(\ell_1, j_1), \ldots, (\ell_{s-1}, j_{s-1})$.\\
    The operator $f_{\ell_s+n-1}$ may appear in several rows while all entries in the last rows are already fixed to be $1$, so the corresponding entry in the last column equals $0$ and the rest of the induction step is similar to the induction start.\\
    As before, we are dualizing our argument to the column, and as a result, we obtain that the predicted monomial, $\underline f^{T(p)}$, is in fact minimal of the given weight in $\Pi_w$. 

\end{proof}

\section{Type C}\label{sec-c}

\begin{proof}[Proof of \ref{lem:unimod} for type $C$]

    This proof will be very similar to the variant for type A.

    For $n \in \N$, let
    \begin{align*}
        B_n := (\underbrace{e_{1,\bar 1}, e_{1, \bar 2}, e_{2, \bar 2}, \dots e_{n-1, \bar {n-1}}}_{=:B_{n,1}}, \underbrace{e_{1,n},\dots,e_{1,2},e_{2,2},e_{1,1}}_{=:B_{n,2}})
    \end{align*}
    We show by induction over $n \in \N$ that $\tensor[^{B_n}]{(-\cC)}{_n^{B_n}}$ is upper triangular with determinant $\pm 1$.
   \noindent If $n=1$, then $\tensor[^{B_1}]{(-\cC_1)}{^{B_1}} = (-1)$.\\
    \noindent For the induction step note that $$B_n = (B_{n-1, 1},e_{1,\bar{n-1}}, \dots, e_{n-1, \bar{n-1}}, e_{1,n},\dots,e_{n,n},B_{n-1,2})$$ Hence:
    \begin{equation*}
         \tensor[^{B_n}]{\cC}{_n^{B_n}} = \left(\begin{array}{c|c|c}
             \cC^{1,1}_{n-1} & \star & \cC^{1,2}_{n-1} \\ \hline
             A & B & \star \\ \hline
             \cC^{2,1}_{n-1} & \Gamma & \cC^{2,2}_{n-1} 
         \end{array}\right)
    \end{equation*}
    Where for the sake of readability we use
    \begin{align*}
        \tensor[^{B_{n-1}}]{\cC}{_n^{B_{n-1}}} = \round{\begin{array}{c|c}
             \cC^{1,1}_{n-1} & \cC^{1,2}_{n-1} \\ \hline
             \cC^{2,1}_{n-1} & \cC^{2,2}_{n-1} 
         \end{array}}
    \end{align*}
    where each $\cC^{j,k}_{n-1}$ corresponds to the induced map $$\tensor[^{B_{n-1}}]{\cC}{_n^{B_{n-1}}}: \spitz{B_{n-1, k}} \to \C^{(n-1)^2}/\spitz{B_{n-1, 3-j}}$$
Considering \ref{def cX}, we see that $A, \Gamma$ are zero matrices and $$B = \round{\begin{array}{ccc|c|ccc}
             1 & \dots & 1 & 2 & & & \\
             0 & \ddots & \vdots & \vdots & &I_{n-1} & \\
             0 & 0 & 1 & 2 & & & \\\hline
             0 & \dots & 0 & 1 & 0 & \dots & 0 \\\hline
             & & & 0 & 1 & \dots & 1\\
             & 0 & & \vdots & &\ddots & \vdots \\
             & & & 0 & 0 & 0 & 1
         \end{array}}$$
\end{proof}

The key idea for this section is to realize the string polytopes for $C_n$ inside of the string polytopes for $A_{2n-1}$ (that is for fundamental weights) and use the results from the previous section.

We follow here \cite{Hum72} for the embedding. As a reminder, we are considering modules for the Lie algebra of type $C_{2n-1}$ and considering the Lie algebra embedding into a Lie algebra of type $A_{4n-3}$
\begin{align*}
    f_j &\mapsto \begin{cases}
        f_j - e_{2n-1+j}, & \text{if } j < 2n-1\\
        f_{2n-1,4n-3}, & \text{else.}
    \end{cases}
\end{align*}

We change from the standard basis of $\C^{4n-2}$ to $$(e_1, \dots, e_{2n-1}, e_{4n-2}, -e_{4n-3}, e_{4n-4}, \dots,  e_{2n})$$

For the new basis the operators and points are translated as follows:
\begin{align*}
    f_j &\mapsto \text{unfold}(f_j) := \begin{cases}
        f_j + f_{4n-2-j}, & \text{if } 1 \leq j<2n-1 \\
        f_{2n-1}, & \text{if } j=2n-1
    \end{cases} \\
    \\
    e_{\ell,j}&\mapsto \text{fold}(e_{\ell,j}) := \begin{cases}
        e_{\ell,j}, & \text{if } (\ell,j) \in H(C_n) \\
        e_{4n-2-j,4n-2-\ell}, & \text{else}
    \end{cases}
\end{align*}

Here unfold maps operators from $C_{2n-1}$ to operators from $A_{4n-3}$ and fold maps points from the string polytope for $A_{4n-3}$ and weight $\omega_{2i-1}$ to the string polytope for $C_{2n-1}$ and weight $\omega_{2i-1}$. 

\begin{exa} Let $n=2$, $i=2$, hence we consider $C_3$ into $A_5$. Then $t_{C_2,\omega_2} = 2e_{1,2} + e_{2,2} + e_{1,\overline{1}}$, the image representing monomials in the Lie algebra of type $C_3$ acting on $V(\omega_3)$. Thus
    \begin{align*}
        \text{unfold}(\underline f^{t_{C_2,\omega_2}}) &= \text{unfold}(f_3 f_2^2 f_{3}) = f_3 (f_2 + f_4)^2 f_3 = f_3(f_2^2 + f_2f_4 + f_4f_2 +f_4^2)f_3 \\
        &\overset{\ref{restrA}}{\sim}_3 f_3f_2f_4f_3 + f_3f_4f_2f_3 \overset{\ref{comm}}{\sim}_3  2f_3f_2f_4f_3
    \end{align*}
As an example for $\text{fold}$:
    \begin{align*}
        \text{fold}(t_{A_3,\omega_2}) &= \text{fold}(e_{1,2} + e_{2,2} + e_{1,3} + e_{2,3}) = e_{1,2} + e_{2,2} + e_{1,\overline 1} + e_{1,2} = t_{C_2,\omega_2}
    \end{align*}
\end{exa}
The example actually generalizes to
\begin{prop}\label{unFoldMons}
    If $a \in \curly{0,1}^{\binom{2n}{2}}$, then $\underline f^a$ is a summand in $\mathrm{unfold}\left(\underline f^{\mathrm{fold}(a)}\right)$, and $\text{fold}(t_{A_{2n-1},\omega_i}) = t_{C_n,\omega_i}$.
\end{prop}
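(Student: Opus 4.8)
The plan is to prove the two assertions separately, starting with the statement about the translation vectors, which is purely combinatorial. Recall that $t_{A_{2n-1},\omega_i}$ is supported on $H(A_{2n-1})$ with coefficient $1$ precisely on those $(\ell,j)$ satisfying $\ell\le i\le j$, and $t_{C_n,\omega_i}$ is given by the three-case formula in Definition~\ref{tau}. I would split $H(A_{2n-1})$ according to whether a pair $(\ell,j)$ lies in $H(C_n)$ (so $\mathrm{fold}$ fixes it) or not (so it is sent to $(4n-2-j,\,4n-2-\ell)$, using $2n-1$ in place of $n$ throughout since we are working with $C_{2n-1}$ embedded in $A_{4n-3}$; I will keep the notation of the excerpt). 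For a pair fixed by $\mathrm{fold}$, its $A$-coefficient $[\ell\le i\le j]$ must match the $C$-coefficient, which is $1$ exactly when $i\le j<\bar\imath$ and $\ell\le i$, or $\bar\ell=j$ and $\ell\le i$ — a short case check using the ordering $1<\dots<n=\bar n<\bar{n-1}<\dots<\bar 1$. For a pair $(\ell,j)$ \emph{not} in $H(C_n)$, it maps to a pair of the ``barred'' form, and one checks that the number of $A$-pairs landing on a given $C$-pair with coefficient $2$ is exactly two (namely $(\ell,j)$ and its partner), each contributing $1$, while those landing on coefficient-$1$ barred pairs come with multiplicity one. Summing these contributions recovers exactly $t_{C_n,\omega_i}$; this is the bookkeeping step and I expect it to be the most error-prone part, though not conceptually hard.

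For the first assertion, fix $a\in\{0,1\}^{\binom{2n}{2}}$ and write $b:=\mathrm{fold}(a)$. By definition $\underline f^{b}$ is a product of powers $f_j^{\beta_{\ell,j}}$ in the fixed order of the Notation preceding Definition~\ref{ordering}, and $\mathrm{unfold}$ replaces each $f_j$ (for $j<2n-1$) by $f_j+f_{4n-2-j}$ and fixes $f_{2n-1}$. Expanding the product $\mathrm{unfold}(\underline f^{b})$ distributively gives a sum of monomials in the $A_{4n-3}$-operators, one for each way of choosing, at each position, either the ``unbarred'' summand $f_j$ or the ``barred'' summand $f_{4n-2-j}$. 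The claim is that one such choice reproduces exactly $\underline f^{a}$ (after reordering via Proposition~\ref{comm}, i.e. up to commuting operators with non-adjacent indices, which is harmless here since we only need $\underline f^a$ to be \emph{a} summand). I would make the selection explicit: at the position corresponding to index $(\ell,j)\in H(C_n)$ in $\underline f^b$, the coefficient $a_{\ell,j}$ of $a$ tells us how many of those choices are ``unbarred''; since all exponents are $0$ or $1$ and $\mathrm{fold}$ is the identity on $H(C_n)$, pick the unbarred summand exactly when $a_{\ell,j}=1$ comes from the unbarred part of $a$, and the barred summand $f_{4n-2-j}=f_{(\cdot)}$ when the contribution to $b$ at that slot came from an $a$-entry indexed outside $H(C_n)$, i.e. from $a_{4n-2-j,4n-2-\ell}$.

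Concretely, the bijection is: each index $(\ell,j)\notin H(C_n)$ satisfies $\mathrm{fold}(e_{\ell,j})=e_{\ell',j'}$ for a unique $(\ell',j')\in H(C_n)$, and the two indices $(\ell,j)$ and $(\ell',j')$ feed into the \emph{same} slot $f_{j'}$ of $\underline f^{b}$ (one needs to check $j=j'$ or that the relevant root operator indices agree modulo the fold, which follows from how $\mathrm{unfold}(f_{j'})=f_{j'}+f_{4n-2-j'}$ is set up). So the distributive expansion of that slot produces $f_{j'}^{a_{\ell',j'}}$ times a choice that can realize $f$-power matching $a_{\ell,j}$ via the barred summand. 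Making the monomial $\underline f^a$ appear is then a matter of choosing barred-versus-unbarred consistently across all slots according to $a$, and invoking Proposition~\ref{restrA} and Proposition~\ref{comm} to reorder the resulting product into the canonical form $\underline f^a$. The main obstacle, as flagged above, is getting the index arithmetic of $\mathrm{fold}$/$\mathrm{unfold}$ exactly right — in particular verifying that $4n-2-j'$ is the correct $A_{4n-3}$-index attached to the barred half and that this lands on the intended coordinate of $a$ — but once the matching of slots is pinned down, both halves of the proposition follow by direct inspection.
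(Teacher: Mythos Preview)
Your plan is essentially correct and follows the same underlying idea as the paper's proof, but the paper takes a significant shortcut that you should be aware of. For the first assertion, rather than handling a general $a\in\{0,1\}^{\binom{2n}{2}}$ by expanding the whole product and making coordinated choices, the paper simply observes that it suffices to check the case $a=e_{\ell,j}$ of a single basis vector. Then the verification is a two-line case split: if $(\ell,j)\in H(C_n)$ then $\underline f^{e_{\ell,j}}=f_{\ell+j-1}$ is visibly a summand of $\mathrm{unfold}(f_{\ell+j-1})$; otherwise $\ell+j>2n$, $\mathrm{fold}(e_{\ell,j})=e_{2n-j,2n-\ell}$, and $\underline f^{e_{\ell,j}}=f_{\ell+j-1}$ is the ``barred'' summand of $\mathrm{unfold}(f_{4n-\ell-j-1})=f_{4n-\ell-j-1}+f_{\ell+j-1}$. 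The passage from this single-factor check to general $a$ uses that $\mathrm{unfold}$ is multiplicative, so a product of summands is a summand of the product. Your distributive-expansion-plus-selection argument is exactly the unwound version of this reduction.

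Two minor points. First, you correctly flag that a reordering step is needed: the $C_n$-ordering on fibres of $\mathrm{fold}$ does \emph{not} literally reproduce the $A_{2n-1}$-ordering on $H(A_{2n-1})$, so the selected monomial and $\underline f^a$ differ as words. They coincide as elements of $U(\mathfrak n^-)$ because the transpositions involved are always between operators whose indices differ by at least $2$ (this is the computation $|k'(q_1)-k'(q_2)|=|4n-2(\ell+m)|\ge 2$ within a fibre, and similarly across fibres); so Proposition~\ref{comm} gives genuine equalities, not merely $\sim_i$-equivalences, and $\underline f^a$ is a literal summand. The paper leaves this reordering entirely implicit in the phrase ``it suffices'', so on this point your plan is in fact more careful. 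Second, Proposition~\ref{restrA} is not needed here---it concerns restriction to a subword for a fixed fundamental weight, whereas this proposition is weight-independent. For the second assertion, your coefficient-by-coefficient case check is exactly what the paper means by ``follows from Definition~\ref{tau}''.
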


\begin{proof}
    If suffices to show this for $a = e_{\ell,j}$. If $(\ell,j) \in H(C_n)$, then
    \begin{align*}
        \mathrm{unfold}(\underline f^{e_{\ell,j}}) = \mathrm{unfold}(f_{\ell+j-1})
    \end{align*}
    of which $f_{\ell+j-1}$ clearly is a summand.
    Otherwise $j + \ell > 2n$ and $\fold(a)=e_{2n-j,2n-\ell}$. Then
    \begin{align*}
        \mathrm{unfold}\left(\underline f ^{e_{2n-j,2n-\ell}}\right) = \mathrm{unfold}(f_{4n-j-\ell-1}) = f_{4n-j-\ell-1} + f_{j+\ell-1}
    \end{align*}
    where the latter is $\underline f^{a}$.

    The second statement follows from definition \ref{tau}.
\end{proof}

\begin{proof}[Proof of \ref{lem:mono} \textit{c})]
We have to show for all $p \in P(\omega_i)^\Z$, $t_{C_n,\omega_i} + \cC_n(p)$. Again, we show two parts: 
    \begin{enumerate}
        \item does not annihilate $e_1 \wedge \dots \wedge e_{2i-1}$,
        \item is minimal.
    \end{enumerate}

\noindent \textbf{Proof of (i):}
First we note, that due to the action with simple root operators, all summands in $\text{unfold}(\underline{f}^m).e_1 \wedge  \ldots \wedge e_{2i-1}$ have strictly increasing wedges with non-negative integer coefficients. \\  
We fix $p \in P(\omega_i)^\Z$  and set $m = t_{C_n,\omega_i} + \cC_n(p)$. Clearly, showing that $\underline{f}^{m}$ does not annihilate $e_1 \wedge \ldots \wedge e_{2i-1}$ is equivalent to show that $\text{unfold}(\underline{f}^{m})$ does not annihilate $e_1 \wedge \ldots \wedge e_{2i-1}$. 
Due to the introductory remark, it suffices to show that there is one summand in $\text{unfold}(\underline{f}^m)$ that does not annihilate $e_1 \wedge \ldots \wedge e_{2i-1}$.\\

\textbf{$p = 0$ }: This case follows from the fact that the point defined in definition~\ref{tau} provides the unique sequence for the extremal weight vector in $V(\omega_{2i-1})$.\\

\textbf{$p \neq  0$ }: We show that there is a summand of $ \mathrm{unfold}(\underline{f}^m)$ that can be reordered to be an operator for type $A_{2n-1}$ of the form $\underline f^x$, where $x = t_{A_{2n-1},\omega_i} + \cA_{2n-1}(p)$ and we embed $p$ canonically (not using fold) into the FFLV-polytope for type $A$. Then we apply our result for type $A$ and with the above consideration the proof is finished.\\

For simplicity, we identify this summand for $p = 0$ (thereby providing another proof for this case). The $p > 0$ case is similar but involves more complex notations. 
\begin{align*}
    \text{unfold}(\underline f^{t_{C_n,\omega_i}}) = &2^{k} [f_{2n-1}][f_{2n-2} f_{2n} f_{2n-1}] \cdots [f_{2n-i} f_{2n-2+i} \cdots f_{2n-2} f_{2n} f_{2n-1}] \\
    &[(f_{2n-i-1} + f_{2n-1+i})(f_{2n-i-2} + f_{2n+i}) \cdots (f_{2n-2} + f_{2n})] \cdots \\
    &[(f_i+ f_{4n-2-i})(f_{i+1} + f_{4n-3-i}) \cdots (f_{2i-1} + f_{4n-1-2i})]
\end{align*}
for some $k \in \N$, but we will ignore the scalars here. Each column of the tableau is written in rectangular brackets for better readability. 

Because of the argument above we may choose in each factor with sums the summand with the minimal index. 

The summand
\begin{align*}
    f := &[f_{2n-1}][f_{2n-2} f_{2n} f_{2n-1}] \cdots [f_{2n-i} f_{2n-2+i} \cdots f_{2n-2} f_{2n} f_{2n-1}] \\ &[f_{2n-i-1}f_{2n-i-2} \cdots f_{2n-2}]
    \cdots [f_i f_{i+1} \cdots f_{2i-1}]
\end{align*}

    can be reordered using \ref{comm} to be a string in $A_{2n-1}$:
    \begin{align*}
    f \sim_{2i-1} f_{\rho_{2n-1}} f_{\rho_{2n-2}} \cdots f_{\rho_i} = \underline f^{t_{A_{2n-1},\omega_i}}
    \end{align*}
    where $\rho_j = s_{j}s_{j+1}\cdots s_{j+i-1}$ and $f_{\rho_j} := f_{j} f_{j+1} \cdots f_{j+i-1}$.

    \begin{exa}
        Let $n=4$, $i=3$, then $f^{t_{C_4,\omega_3}} = [f_7][f_6^2f_7][f_5^2 f_6^2 f_7][f_4f_5f_6][f_3f_4f_5]$ and
        \begin{align*}
            \mathrm{unfold}(f^{t_{C_4,\omega_3}}) &= [f_7][(f_6 + f_8)^2f_7]\cdots [(f_3 + f_{11}) (f_4 + f_{10}) (f_5 + f_9)] \\
            &= 8 \cdot [f_7] [f_6 f_8 f_7] \cdots [(f_3 + f_{11})(f_4 + f_{10})(f_5 + f_9)]
        \end{align*}
        Then the summand we are interested in is $f:=[f_7] [f_6 f_8 f_7][f_5 f_9 f_6 f_8 f_7] [f_4 f_5 f_6] [f_3 f_4 f_5]$ and reordering provides obviously:
        \begin{align*}
            f 
            &\overset{\ref{comm}}{\sim}_{5} [f_7 f_8 f_9] [f_6 f_7 f_8][f_5 f_6 f_7] [f_4 f_5 f_6] [f_3 f_4 f_5] = f_{\rho_7} f_{\rho_6} f_{\rho_5} f_{\rho_4}f_{\rho_3} = \underline f^{t_{A_7,\omega_3}}
        \end{align*}
    \end{exa}
    \medskip

    \noindent We continue with the proof and $p > 0 $. Here, we obtain a subword of $\text{unfold}(\underline f^{t_{C_n,\omega_i}})$, so we can just repeat the procedure above to get an operator for type $A_{2n-1}$,
    we claim, that $t_{\cA_{2n-1},\omega_i} - \cA_{2n-1}(p)$ yields this operator.
    
    Indeed, let $\delta=1-\delta_{n,\ell+j-1}$, then one checks that 
    $$\alpha_{\ell,j}^i(C_n)=\alpha_{\ell,j}^i(A_n)+\delta\alpha_{2n-j,2n-\ell}^i(A_n)$$
    (see \ref{tau}) and
    $$\cC_n(e_{a,b})_{\ell,j}=\cA_{2n-1}(e_{a,b})_{\ell,j}+\delta\cA_{2n-1}(e_{a,b})_{2n-j,2n-\ell}$$
    (see \ref{def cX}). Thus     \begin{align*}
        (t_{\cC_{n},\omega_i} - \cC_{n}(p))_{\ell, j} &=(t_{\cA_{2n-1},\omega_i} - \cA_{2n-1}(p))_{\ell, j}+\delta(t_{\cA_{2n-1},\omega_i} - \cA_{2n-1}(p))_{2n-j,2n-\ell} \\
        \Rightarrow t_{\cC_{n},\omega_i} - \cC_{n}(p) &= \mathrm{fold}\left(t_{\cA_{2n-1},\omega_i} - \cA_{2n-1}(p)\right)
    \end{align*}
    Therefore, by Proposition \ref{unFoldMons} we can obtain the summand $\underline f^{t_{\cA_{2n-1},\omega_i} - \cA_{2n-1}(p)}$ by choice of operators as described above. Since we already proved, that this string does not annihilate the highest weight vector in the preceding section, we get with the arguments above that neither does $t_{\cC_{n},\omega_i} - \cC_{n}(p)$.

\noindent\textbf{Proof of (ii):}

    Let $v:=t_{C_n,\omega_i}-\cC_n(p)$ and $\tilde v \sim_{2i-1} v$. It suffices to show, that $v$ is smaller than $\tilde v$ in type $C_{2n-1}$.

    We define $v_\min=t_{A_{2n-1},\omega_i} + \cA_{2n-1}(p)$. 
    Since $\fold(v_\min)=v$, we have that $\underline f^{v_\min}.e_1\wedge \dots \wedge e_{2n-1}$ is a summand of $\underline f^v.e_1\wedge \dots \wedge e_{2n-1}$ by Proposition \ref{unFoldMons}. 
    We showed in the proof above, that $v$ does not annihilate the highest weight vector, therefore there has to exist a minimal $A_{4n-3}$-string  $\tilde v_\min$ with $\fold(\tilde v_\min)=\tilde{v}$ and $\tilde v_\min\sim_{2i-1} v_\min$ as $A_{4n-3}$-strings.
    Since $v_\min$ is minimal as an $A_{4n-3}$-string (with \ref{lem:mono} \textit{a}), we know that $v_\min$ is smaller than $\tilde v_\min$ in type $A_{4n-3}$. Since the ordering is preserved under $\text{fold}$ for $\sim_{2i-1}$ equivalent monomials, we deduce $v < \tilde{v}$ for type $C_{2n-1}$ and we are done. We now show the latter:

    Since $v_\min$ is minimal as $A$-string, we have $(v_\min)_q > (\tilde v_\min)_q$ (because the corresponding entry was shifted down along the anti-diagonal and fold does not change the order among operators of equal weight).

    Therefore $\text{fold}(v_\min)_{\text{fold}(e_q)}>\text{fold}(\tilde v_\min)_{\text{fold}(e_q)}$. By the minimality of $q$, it follows that $v = \text{fold}(v_\min) <_C \text{fold}(\tilde v_\min) = \tilde v$.

\end{proof}

\printbibliography
\end{document}